\DeclareMathAlphabet{\mathpzc}{OT1}{pzc}{m}{it}
\titleformat{\section}{\normalfont\scshape\center}{\thesection}{1em}{}
\titleformat{\subsection}{\normalfont\scshape}{\thesubsection}{1em}{}
\titleformat{\subsubsection}{\normalfont\scshape}{\thesubsubsection}{1em}{}
\newcommand{\N}{\mathbb N}
\newcommand{\R}{\mathbb R} 
\newcommand{\E}{\mathbb E}
\renewcommand{\P}{\mathbb P} 
\newcommand{\D}{\mathcal{D}}
\newcommand{\F}{\mathcal{F}}
\newcommand{\e}{\mathrm{e}}
\newcommand{\PP}{\mathscr{P}}
\newcommand{\8}{\infty}
\newtheorem{thm}{Theorem}[section]
\newtheorem{lem}[thm]{Lemma}
\newtheorem{prop}[thm]{Proposition}
\newtheorem{cor}[thm]{Corollary}
\numberwithin{equation}{section}
\newcounter{c}
\begin{document}
\title{Finite-size corrections to the speed of a branching-selection process
}
\author{Francis Comets \and Aser Cortines}

\maketitle

{\footnotesize 
\noindent  Universit\'e Paris Diderot -- Paris 7, 
Math\'ematiques, 
 case 7012, F--75205 Paris
Cedex 13, France
\\
\noindent e-mail:
\texttt{\{comets, cortines\}@math.univ-paris-diderot.fr}
}

\begin{abstract}
We consider a particle system studied by E. Brunet and B. Derrida \cite{Brunet2004}, which
evolves according to a branching mechanism with selection of the fittest keeping the population size fixed and equal to $N$.
The particles remain grouped and move like a travelling front driven by a random noise with a deterministic speed.
Because of its mean-field structure, the model can be further analysed as $N \to \8$.  
We focus on the case where the noise lies in the max-domain of attraction of the Weibull extreme value distribution and 
show that under mild conditions the correction to the speed has universal features depending on the tail probabilities. 
\end{abstract}
\small{\emph{Keywords:} Front Propagation Speed; \and Finite-size Corrections; \and Branching Random Walk; \and Selection; \and Extreme Value Theory; \and First-Passage Percolation; \and Mean-field}\\
\small{\emph{2010 Mathematics Subject Classification:}{Primary 60K35; secondary 60F17, 82C22}}

\section{Introduction  and main result}\label{Section:Introduction}

We consider a model of front propagation introduced by Brunet and Derrida \cite{Brunet2004}. A constant number $N$ of particles evolve on the real line in discrete time. Let  $X_1(0), \ldots, X_N(0)$ be the particles initial positions. With $\{ \xi_{ij} (s); 1 \leq i,j \leq N , \, s \geq 1 \}$ an i.i.d. family of r.r.v.s, the positions evolve as
\begin{equation}\label{equa: definition front propagation}
X_i(t+1) : =\max_{1 \leq j \leq N} \big\{X_j(t) + \xi_{ji} (t+1)  \big\}.
\end{equation}
For $\xi_{ij} \in L^1$, it is proved in \cite{Comets2013}  that the limits
\begin{equation} \notag
\lim_{t\to \infty} \frac{1}{t}\max_{1\leq i \leq N} X_i(t) = \lim_{t\to \infty} \frac{1}{t}\min_{1\leq i \leq N} X_i(t) = v_N(\xi)  
\end{equation}
exist a.s. with  $v_N(\xi)$ a real constant depending on the law of $\xi$. The limit $v_N(\xi)$ is called the speed of the $N$-particle system, 
and we study here its asymptotic for large $N$.
\medskip

The model can be viewed as the combination of a branching step with a fixed number $N$ of offspring per individual
each one being subject to mutation given by the addition of a fresh r.v. $\xi$, and a selection step where only the $N$ ``rightmost" among these $N^2$ offspring are kept.  
One major question of the field is to understand what really determines the motion and
derive the universality properties of such models. In particular, the speed of propagation depends 
on the parameter tuning how stringent the selection is, and one is interested in the corrections with respect to the speed of the model without selection. 
The definition of ``rightmost" used in the present paper,  see
(\ref{equation:front propagation / polymer}), is somewhat specific, and is different from the traditional choice
of $M$-branching random walks \cite{Brunet1997, Brunet1999} when all newborn individuals are simultaneously compared. 
For the latter choice we mention \cite{Berard2010}  and  \cite{Couronne2011}, and also
\cite{Mueller2011}  for the continuous case. A dual problem is the survival of the branching population killed by a moving obstacle, e.g. a line \cite{Berestycki2013}. 
In its general form, the model relates to propagations of pulled fronts, when the motion is determined by the leading edge \cite{Panja2004}. 
Archetypes of  pulled fronts are branching random walks or branching Brownian motions, described by the Kolmogorov-Petrovskii-Piscunov equation. There, and in contrast
to the present case, one looks for the second order correction {\it in  time} of the rightmost position to its leading order \cite{Bramson1983}. 
Though effective equations in the continuum are available  to describe front dynamics, the process here is intrinsically random and discrete,
adding  interest to its understanding. We note from \cite{Durrett2011} that asynchronous dynamics leads to  free boundary problems.

\medskip

 Already mentioned in \cite{Cook1990}, the model (\ref{equa: definition front propagation}) was taken up in  \cite{Brunet2004} and
 studied in the case of Gumbel distribution for $\xi$, which leads to an exact solution for fixed $N$, and  results have been extended in a 
 perturbative picture provided that $\xi$ has an
 exponential upper tail  \cite{Comets2013}.
In the present paper, we consider perturbations of the Weibull  distribution, including
bounded $\xi$'s with a polynomial density close to its maximal value.

\medskip

From a different perspective, our model can be interpreted as a first passage percolation.
%
%
By a simple induction argument, one obtains from (\ref{equa: definition front propagation}) 
the formula
\begin{equation} \label{equation:front propagation / polymer}
X_i(t)= \max \Big\{ X_{j_0}(0) + \sum_{s=1}^{t} \xi_{j_{s-1} j_s} (s); 1\! \leq \! j_s \!\leq \! N, \, \forall s = 0, \ldots, t\!-\!1 \text{ and } j_t \!= \!i \Big\},
\end{equation}
which yields 
a path representation of the interacting particle system. 
 We interpret now $-\xi_{ji}(t+1)$ as the passage time on the oriented edge from $(j,t)$ to $(i, t+1)$: 
As (\ref{equation:front propagation / polymer}) shows,   the negative of $X_i(t)$ is the passage time from the line $t=0$ to the point $(i,t)$, in a model of first passage percolation  on the vertex set $\{1,\ldots,N\} \times \N$, and $v_N$ is the so-called time constant of the model. Here the graph is oriented ($t$-coordinate increases by one 
unit at each step of the path), though on the transverse direction  jumps are allowed between all pairs of sites $j,i \ (1\leq i, j \leq N)$.  Since the graph is complete in the transverse direction, the model is of mean-field type. 
For general percolation models the value of  the time constant is not available, but in the present case the mean-field feature allows us to 
 determine the time-constant up to first order  in the limit of large graphs. In the particular case of exponential passage times, the first formula in (\ref{eq:vexp}) below is in force.

\medskip

To describe our framework, denote by $\Lambda(u)$ the logarithmic generating function of $\xi_{ij}$,
$$
\Lambda(u) := \ln \E \big[ \exp(u \xi_{ij}) \big],
$$
and let $\D_\Lambda := \{ u \in \R ; \Lambda(u) < \infty \}$ be its domain. In this paper, we will assume that the following hypothesis hold:
\begin{enumerate}[label=(H\arabic*)]
\item \label{first_hypotesis}$0 \in \D^0_\Lambda$ (the interior of $\D_\Lambda$). In particular, $\xi_{ij}$ has finite moments of all orders. 
\item  \label{second_hypotesis} For every $N \in \N$ there exists a $u_N \in \D^0_\Lambda \cap [0 , \infty )$ such that
\begin{equation}\label{equation definition u_N}
u_N \Lambda'(u_N) - \Lambda(u_N) = \ln N.
\end{equation}
\end{enumerate}
The function $u \Lambda'(u) - \Lambda(u) $ is increasing on $\D^0_\Lambda \cap [0 , \infty )$, hence $u_N$ is unique. Under these hypothesis the number 
\begin{equation}\label{equa:v_N^*}
v_N: = \Lambda'(u_N)
\end{equation} 
is well defined. If $I_\xi(v)$ is the Cramer transform of $\xi_{ij}$
$$
I_\xi (v): = \sup_{x\in \R} \big\{v x - \Lambda(x) \big\},
$$ 
then $v_N$ is determined by $I_\xi (v_N) = \ln N, v_N> \E [\xi]$, see Figure \ref{fig:Ixi}, and it holds $I_\xi'(v_N)=u_N$.

\begin{figure}
\begin{center}
\includegraphics[width=0.5\textwidth]{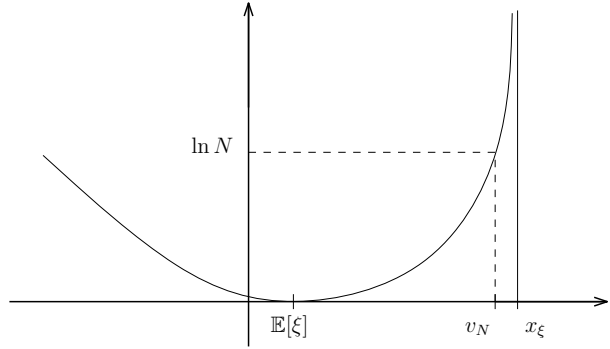}
\caption{Cramer transform $I_\xi$ and $v_N$.}
\label{fig:Ixi}
\end{center}
\end{figure}

\medskip

In Section \ref{Section upper bound to the velocity}, we show that $v_N$ is an upper bound for the velocity $v_N(\xi)$ of the $N$-particle system. To obtain a lower bound to $v_N(\xi)$, we do some additional assumptions on $\xi_{ij}$ and focus on a more restrictive class of distributions. Denoting by $F(\cdot)$ the common probability distribution function
$$
F(x) := \P\big( \xi_{ij} \leq x \big),
$$
we will further assume that $F(\cdot)$ belongs to the max-domain of attraction of the Type~III extreme value distribution with probability distribution function 
$\Psi_\alpha (\cdot)$ given by
\begin{equation}\label{equaiton_Weibull}
\Psi_\alpha (x) = \left \{
\begin{array}{l l}
    \exp \left(- |x|^{\alpha}  \right) & \text{ if } x < 0 \\
    1 & \text{ if } x \geq 0, \\
\end{array}
\right.
\end{equation}
for some $\alpha > 0$. 
This law is sometimes called reverse--Weibull  (see e.g. Chapter 1 in \cite{Resnick1987}),
or Weibull for short,  and it is the law of $- {\mathcal E}^{1/\alpha}$ with 
${\mathcal E}$ an exponential variable with mean 1.
It is well known that $F(\cdot)$ belongs to the domain of attraction of $\Psi_\alpha (\cdot)$ if and only $F(\cdot)$ has a finite right-end
\begin{equation} \notag
x_\xi := \sup \{ x \in \R; F(x) < 1 \} < \infty,
\end{equation}
and for each $x > 0$ 
\begin{equation}\label{equation_condition_typeIII_convergence} \notag
\lim_{h \to 0+} \frac{1-F\big(x_\xi - h x\big)}{1-F\big(x_\xi - h\big)} = x^\alpha,
\end{equation}
see, for example, Proposition 1.13 in Section 1.3 of \cite{Resnick1987}. In this case, let
\begin{equation}\label{equation_definition_aN}
a_N := x_\xi - \inf\{ x ; F(x) \geq 1-1/N \},
\end{equation}
then $F^N\big(x_\xi + x a_N  \big) \to \Psi_\alpha (x)$ as $N \to \infty$ and
\begin{equation}   \notag \label{equaiton_assymptotic_aN}
\lim_{N \to \infty} N \big( 1-F(x_\xi - a_N) \big) = 1
\end{equation}
The main result of this paper is the following theorem concerning the speed of the $N$ particle system.

\begin{thm}\label{thm:main_theorem} Assume that \ref{first_hypotesis}, \ref{second_hypotesis} hold, and that $\xi_{ij}$ belongs to the domain of attraction of the extreme value distribution $\Psi_\alpha$, for some $\alpha > 0$. Let
\begin{equation}\label{defintion_c_alpha}
c_\alpha := \frac{\alpha}{\e} \Big( {\Gamma(\alpha) \alpha } \Big)^{-\frac{1}{\alpha}},
\end{equation}
where $\Gamma(\cdot)$ is Euler's gamma function and $\e = 2.718\ldots$ is the  Napier's constant. Then, the speed $v_N(\xi)$ of the $N$-particle system satisfies   
\begin{equation}\notag
\displaystyle v_N(\xi) =  x_\xi - c_\alpha a_N + o \left( a_N \right) \qquad \text{as } N \to \infty, 
\end{equation}
where $a_N$ is given by (\ref{equation_definition_aN}).
\end{thm}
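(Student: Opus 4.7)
The plan is to combine the upper bound $v_N(\xi) \leq v_N := \Lambda'(u_N)$ already established in Section~\ref{Section upper bound to the velocity} with an asymptotic analysis of $v_N$ under the Weibull assumption and a matching lower bound obtained from the percolation representation \eqref{equation:front propagation / polymer}.

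For the asymptotic of $v_N$, I would set $Y := x_\xi - \xi \geq 0$; the Weibull hypothesis translates into $\P(Y \leq y) = y^\alpha L(y)$ for a function $L$ slowly varying at $0^+$ with $a_N^\alpha L(a_N) \sim 1/N$. A Karamata--Tauberian theorem then yields, as $u \to \infty$,
\[
\E\bigl[\e^{-uY}\bigr] \;\sim\; \Gamma(\alpha+1)\, u^{-\alpha}\, L(1/u).
\]
Since $\Lambda(u) = u\, x_\xi + \ln\E[\e^{-uY}]$ and $\Lambda$ is convex, this produces the expansions $\Lambda'(u) = x_\xi - \alpha/u + o(1/u)$ and $u\Lambda'(u) - \Lambda(u) = \alpha \ln u - \alpha - \ln\Gamma(\alpha+1) - \ln L(1/u) + o(1)$. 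Inserting the second identity into \eqref{equation definition u_N} and comparing with the normalisation of $a_N$ gives $u_N a_N \to \e\,(\alpha \Gamma(\alpha))^{1/\alpha}$ and consequently $v_N = \Lambda'(u_N) = x_\xi - c_\alpha a_N + o(a_N)$. Together with $v_N(\xi) \leq v_N$ this proves the upper half of Theorem \ref{thm:main_theorem}.

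For the lower bound, fix $\epsilon > 0$, set $v_N^- := x_\xi - (c_\alpha + \epsilon) a_N$ (so $v_N^- < v_N$ for $N$ large by the step above), and use \eqref{equation:front propagation / polymer} with $X_j(0) \equiv 0$. Let $Z_t$ be the number of directed paths of length $t$ with a fixed endpoint along which the sum of the $\xi$'s exceeds $t\, v_N^-$. A Cramer estimate gives
\[
\E[Z_t] \;=\; N^t\,\P\!\Bigl(\textstyle\sum_{s=1}^t \xi_s \geq t\,v_N^-\Bigr) \;\geq\; \exp\!\bigl(t\,[\ln N - I_\xi(v_N^-)]\,(1+o_t(1))\bigr),
\]
which is exponentially large in $t$. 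For the second moment I would classify pairs $(\pi_1, \pi_2)$ according to the number $k$ of shared edges: the joint exceedance probability is bounded by $\exp(-(2t-k)\, I_\xi(v_N^-))$ up to subexponential factors, while the number of such pairs is $O(N^{2t-2k})$. Summing a geometric series in $k$, which converges thanks to $I_\xi(v_N^-) < \ln N$, yields $\E[Z_t^2] \leq C\,(\E[Z_t])^2$. Paley--Zygmund then gives $\P(Z_t \geq 1) \geq 1/(2C)$, and a $0$--$1$ argument on the tail event $\{v_N(\xi) \geq v_N^-\}$, based on the a.s.\ existence of $v_N(\xi) = \lim_t \max_i X_i(t)/t$ and translation invariance in time, upgrades this to $v_N(\xi) \geq v_N^-$ almost surely. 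Sending $\epsilon \to 0$ concludes the proof.

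The hard part is the second moment estimate: the rate $\ln N - I_\xi(v_N^-)$ is only of order $\epsilon\,a_N u_N$, a fixed but small positive constant in the limit, so one must track carefully both the combinatorial explosion of correlated path pairs in the mean-field geometry and the boost in their joint probabilities. A closely related subtlety in the first step is to control the slowly varying factor $L(1/u_N)$ uniformly enough that its logarithm does not corrupt the $o(a_N)$ correction. A more robust but delicate alternative to the second moment method is to couple the selected dynamics with an unkilled $N$-offspring branching random walk (whose speed is $v_N$ by Biggins' theorem) on short time windows and argue that selection costs at most $o(a_N)$ per unit time; this requires sharp control on the width of the selected front, which is itself a genuine Brunet--Derrida-type issue.
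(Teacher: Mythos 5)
Your upper bound is essentially the paper's: Proposition~\ref{prop:asymptotic_vn} establishes $\Lambda'(u_N) = -c_\alpha a_N + o(a_N)$ by the same Karamata asymptotics for $\E[\e^{u_N \xi}]$ that you package as a Tauberian theorem, and combines it with the first-moment bound $v_N(\xi) \leq \Lambda'(u_N)$. That half is sound.

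The second-moment lower bound, however, has a genuine gap, and it is exactly where you flag the difficulty. Your claimed estimate
$\P\bigl(S(\pi_1)\geq tv,\ S(\pi_2)\geq tv\bigr)\lesssim \exp\bigl(-(2t-k)I_\xi(v)\bigr)$
for two paths sharing $k$ edges is \emph{not} an upper bound. Take the dominant, tree-like overlap where the paths share their first $k=\theta t$ edges and then diverge. Writing $S(\pi_i)=U+V_i$ with $U$ the common prefix sum and $V_1,V_2$ conditionally independent, a Laplace/Varadhan computation gives the true rate
$h(\theta)=\min\{\theta I_\xi(u)+2(1-\theta)I_\xi(w) : \theta u+(1-\theta)w=v\}$,
attained at $I_\xi'(u^*)=2I_\xi'(w^*)$, i.e.\ the shared prefix is atypically fast. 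One has $h(\theta)<(2-\theta)I_\xi(v)$ strictly for $\theta\in(0,1)$, so the joint exceedance probability is exponentially \emph{larger} than you assert. The combinatorics is also off: pairs sharing a prefix of $k$ edges number $\asymp N^{2t-k}$, not $N^{2t-2k}$. Putting these together, the ratio $\E[Z_t^2]/(\E Z_t)^2$ has $k$-th term $\asymp \e^{tf(k/t)}$ with
$f(\theta)=-\theta\ln N-h(\theta)+2I_\xi(v)$, $f(0)=0$, and — using $I_\xi'(v_N)=u_N$, $\Lambda'(u)\sim -\alpha/u$ — one finds $f'(0)=\alpha(1-\ln 2)+O(\varepsilon)>0$. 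Hence $\E[Z_t^2]/(\E Z_t)^2$ diverges exponentially in $t$, Paley–Zygmund yields nothing, and the whole argument collapses. This is the familiar obstruction (``lucky prefix'' / boosting) that prevents a bare second-moment method from reaching the critical speed in branching random walk; a truncated (barrier) second moment would be needed, which is a substantial missing ingredient.

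The paper's lower bound takes a different route that avoids the second moment entirely. One couples the $N$-particle system from below with an $M$-BRW whose reproduction law $\mathcal{L}^{(K)}$ is the set of the $K$ largest among $N-KM$ fresh $\xi$'s, drawn from disjoint index sets for the $M$ parents (Lemmas~\ref{lemma_coupling_BRW}, \ref{lemma independence point processes}). Rescaling by $a_N$ makes $\mathcal{L}^{(K)}$ converge to the truncated Poisson point process $\PP^{(K)}$ with intensity $\alpha|z|^{\alpha-1}\mathrm{d}z$ on $\R_-$ (Proposition~\ref{proposition moment convergence}), whose unselected BRW speed tends to $-c_\alpha$ as $K\to\infty$ (Corollary~\ref{cor:PPK}). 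The finite-size cost of keeping only $M$ particles is then controlled by adapting B\'erard--Gou\'er\'e's regeneration scheme together with the Gantert--Hu--Shi survival estimate (Theorem~\ref{theorem gantert hu shi}), uniformly in large $N$. Your alternative — coupling with an unkilled $N$-offspring BRW and showing selection costs $o(a_N)$ — goes in the right spirit but misses the two crucial devices that make the paper's proof work with fixed $K$ and $M$: the Poisson rescaling, and the $M$-BRW machinery that quantifies the selection cost.
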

\medskip

Theorem~\ref{thm:main_theorem} gives a partial answer to the problem raised by E. Brunet and B. Derrida \cite{Brunet2004}, and we obtain the finite-size corrections to the speed for a large class of distributions that are bounded from above. Our result comprises, for example, the negative of the exponential distribution and the uniform distribution, for which the correction to the speed are
\begin{equation}
\label{eq:vexp}
v_N(-\mathcal{E}) \sim -\frac{1}{\e N} \quad \text{and} \quad \left( v_N(U) - 1 \right) \sim -\frac{1}{\e N}  \quad \text{as } N \to \infty,
\end{equation}
respectively. In the above formulas, ``$\sim$" means that the ratio of the sides approaches to one as $N \to \infty$.

\medskip
\textbf{Warm-up calculations:} Let us explain how to determine the order of magnitude of the correction from elementary considerations. Assume in this paragraph  that $x_\xi=0$. On the one hand, we can bound from below our $N$-particle system 
with a single particle following the leader, i.e., the random walk with jumps law given by $\max_{i \leq N} \xi_{1i}$, resulting with a lower bound for $v_N(\xi)$ of order $a_N$. On the other hand, a naive upper bound is given by the random walk with jumps $\max_{i,j \leq N} \xi_{ji}$, which leads to a different order $O(a_{N^2})$ of the correction for the maximum is over $N^2$ variables this time.
One can improve the  upper bound by using the first moment method of Section \ref{Section upper bound to the velocity},  leading to the same  order $O(a_N)$ as the lower bound. However, the 
multiplicative factors do not match, and some deeper understanding and improvement of the lower bound is needed. 
This is what we implement in Section \ref{Section lower bound}, using a comparison with a branching random walk with selection.

\medskip
\textbf{Organization of the paper:} In Section~\ref{section_general_results}, we present some point processes and branching random walks related to our model
and we sumarize the necessary results for our purpose. 
We prove the upper bound for the speed in Section~\ref{Section upper bound to the velocity} by a first moment estimate, and 
the  lower bound in Section~\ref{Section lower bound} by coupling, the two bounds resulting in 
Theorem~\ref{thm:main_theorem}.

\section{Point processes and branching random walks}\label{section_general_results}

In this section, we introduce different processes  entering the analysis of the particle system (\ref{equa: definition front propagation}).
 
\subsection{Point measures on $\R$}

It is convenient to represent populations of particles by point measures on $\R$. Given a vector $x \in \R^n$ with coordinates $x_1, \ldots, x_n$, one can associate the point measure 
$$
\mathpzc{x} := \sum_{i=1}^{n} \delta_{\{ x_i\}}.
$$ 
We use the notation $\mathcal{M}_b$ to represent the set of all simple point measures on $\R$, which are  locally finite and have a maximum.
Throughout this paper, a point process is any random variable $\mathcal{L}$ taking values on $\mathcal{M}_b$. 

Conversely, an element $\nu \in \mathcal{M}_b$ can be described
as a sequence $\nu = (\nu_i)_{i=1, 2 , \ldots }$ (possibly finite) such that 
$$
\nu_1 \geq \nu_2 \geq \ldots 
$$
We denote by $\max (\nu) = \nu_1$ the maximum of the support of $\nu$, and by $|\nu| = \nu(\R)
\leq \8$ the number of points in $\nu$. 
If two point measures $\nu$ and $\mu$ have the same number of points $|\nu| = |\mu| = K$, we can define the distance
\begin{equation}\label{measure distance}
\| \nu- \mu \| = \sup_{1 \leq i \leq K} \{ |\nu_i - \mu_i| \}.
\end{equation} 
We use the notation ``$\prec$" to denote the usual stochastic ordering
\begin{equation} \notag
\nu \prec \mu \quad \text{if and only if} \quad \nu [ x, +\infty) \leq \mu [ x, +\infty); \qquad \forall x \in \R,
\end{equation}
and we will say that ``$\nu$ bounds $\mu$ from below", in this case $|\nu| \leq |\mu|$. If we represent $\nu$ and $\mu$ as an ordered sequence of points, then $ \nu \prec \mu$ implies that 
$$
\nu_i \leq \mu_i \qquad \text{for every } i \leq |\nu|.
$$  
With a slight abuse of notation we will say that the vector $x\in \R^n$ bounds $y \in \R^m$ from below and denote ``$x \prec y$" if the point measures $\mathpzc{x}, \mathpzc{y}$ associated to $x$ and $y$ respectively satisfy $\mathpzc{x} \prec \mathpzc{y}$.

\subsection{Poisson point processes on $]-\infty, 0]$}

In this section, we present some elementary facts concerning Poisson Point Process 
$$
\PP=\big\{ \PP_1  > \PP_2 > \ldots \big\} \subset \R_-\,,
$$
with intensity measure 
$ |z|^{\beta} C \text{d} z$ on $\R_-$; we use the abbreviation PPP and assume that $C>0, \beta >-1$. 
For $K \geq 1$, the point process 
\begin{equation}
\label{def:PPK}
\PP^{(K)}:=\big( \PP_i \big)_{i\leq K}
\end{equation}
consisting in the $K$ largest points of $\PP$ will play an important role in the next sections.

For ${\mathcal L}$ a random point measure on $\R_-$, we denote by $\psi \big(u \mid {\mathcal L} \big)$ its logarithmic moments generating function 
\begin{equation} \notag
\psi \big(u \mid  {\mathcal L} \big) := \ln \E \left[ \int \e^{u y}  {\mathcal L} \big(\text{d}y \big)  \right],
\end{equation}
We can easily compute the logarithmic generating function of the PPP and of its $K$-truncation.

\begin{lem}\label{Poisson_lemma} For $\beta > -1, C>0$, let $\PP$ be the Poisson point process on $(-\infty, 0]$ with intensity measure $\mu (\text{d} z)= |z|^{\beta} C \text{d} z$, and $\PP^{(K)}$ its largest $K$  points.
 For $u > 0$ we have
$$
\E \left[ \int_{-\infty}^{0} \e^{u z} \PP (\text{d} z) \right] = \frac{\Gamma (1+\beta) C }{u^{1+\beta}}, 
\quad 
 \quad
\E \left[ \int_{-\infty}^{0} z \e^{u z} \PP (\text{d} z) \right] = \frac{-\Gamma (2+\beta) C }{u^{2+\beta}},  
$$ 
and
$$
\lim_{K \to \infty} \E \left[ \sum_{i=1}^{K} \e^{u \PP_i } \right] =  \frac{\Gamma (1+\beta) C }{u^{1+\beta} }, 
\quad 
 \quad
\lim_{K \to \infty} \E \left[ \sum_{i=1}^{K} \PP_i \e^{u \PP_i } \right] = \frac{-\Gamma (2+\beta) C }{u^{2+\beta}}. 
$$ 
\end{lem}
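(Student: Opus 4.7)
My plan is to obtain the first two identities by Campbell's theorem for Poisson point processes, and the two limits by monotone convergence from the fact that $\PP^{(K)}$ consists of the top $K$ atoms of $\PP$.

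For the first identity, Campbell's formula will give
$$\E \left[ \int_{-\infty}^{0} e^{uz}\, \PP(\text{d}z) \right] = C \int_{-\infty}^{0} e^{uz} |z|^{\beta}\, \text{d}z,$$
which I would evaluate via the substitution $t = -z$ followed by $s = ut$, recognizing the resulting integral as $C\,\Gamma(1+\beta)/u^{1+\beta}$. The integral converges since $\beta > -1$ controls the behaviour near $0$ while $u > 0$ controls the tail at $-\8$. The second identity follows the same scheme applied to the non-negative function $-z\,e^{uz}$: the corresponding integral evaluates to $C\,\Gamma(2+\beta)/u^{2+\beta}$, and the sign flip gives the stated expression. (Equivalently, one could differentiate the first identity in $u$ and justify the interchange by dominated convergence.)

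For the two limits, I would use that the ordered atoms satisfy $\sum_{i=1}^{K} e^{u \PP_i} \nearrow \int_{-\infty}^0 e^{uz}\, \PP(\text{d}z)$ as $K \to \8$, since the summands are non-negative; monotone convergence then transfers the third identity from the first. For the fourth, since $\PP_i \leq 0$ we have $-\PP_i\, e^{u\PP_i} \geq 0$, so $-\sum_{i=1}^K \PP_i\, e^{u\PP_i}$ is non-decreasing in $K$ with pointwise limit $-\int z\, e^{uz}\, \PP(\text{d}z)$, and a second application of monotone convergence yields the fourth identity from the second.

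I do not anticipate any serious obstacle: the whole lemma is a direct calculation once Campbell's formula is invoked. The only technical point worth checking is the integrability of $e^{uz}$ and $z\,e^{uz}$ against the intensity measure $|z|^{\beta}\,C\,\text{d}z$, which is automatic under the standing conditions $\beta > -1$ and $u > 0$.
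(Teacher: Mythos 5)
Your proposal is correct and follows exactly the route the paper takes: Campbell's formula for the two expectations over the full process, and monotone convergence (valid because the integrands keep a fixed sign on $(-\infty,0]$) to pass from the $K$-truncation to the full process. The paper states this in one line; your write-up simply fills in the routine substitution $t=-z$, $s=ut$ and the integrability check near $0$ and at $-\infty$, which the paper leaves implicit.
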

\begin{proof}
The first claim is obtained by the Campbell formula (see Chapter 9 in \cite{Daley2003}) and the second claim is obtained by monotone convergence.
\end{proof}

\begin{cor} \label{cor:PPK}
Under the assumptions of Lemma \ref{Poisson_lemma}, the sequence  $\psi \big(u \mid \PP^{(K)} \big)$ converges uniformly on the compacts of $\R_+$ to $\psi \big(u \mid \PP \big)$ as $K\to \infty$. In particular, if $u_K > 0 $ is such that
$$
\psi \big(u_K \mid \PP^{(K)} \big) = \psi' \big(u_K \mid \PP^{(K)} \big)u_K
$$
then 
\begin{equation} \label{equation_corollary_limit}
\lim_{K \to \infty} \psi ' \big(u_K \mid \PP^{(K)} \big)= - \frac{1+\beta}{\e}\left(\frac{1}{C \Gamma(1+\beta) } \right)^{1/1+\beta}.
\end{equation}
\end{cor}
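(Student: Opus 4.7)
The plan is to show that both $\psi(\cdot\mid\PP^{(K)})$ and its derivative converge, uniformly on compacts of $(0,\infty)$, to the log-Laplace transform of $\PP$ and its derivative, to solve the limiting self-consistency equation explicitly, and then to pass to the limit in the defining equation for $u_K$. The main obstacle will be transferring convergence of $\psi(\cdot\mid\PP^{(K)})$ into convergence of its derivative and then into convergence of the implicit solutions $u_K$.

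\textbf{Step one: pointwise and uniform convergence of $\psi$.} From Lemma \ref{Poisson_lemma} together with monotone convergence,
\[
\E\Bigl[\sum_{i=1}^K \e^{u\PP_i}\Bigr] \uparrow \frac{\Gamma(1+\beta)C}{u^{1+\beta}} \qquad (K\to\infty,\ u>0),
\]
so $\psi(u\mid\PP^{(K)}) \to \psi(u\mid\PP) := \ln\bigl(C\Gamma(1+\beta)\bigr) - (1+\beta)\ln u$ monotonically in $K$. Each $\psi(\cdot\mid\PP^{(K)})$ is continuous and the limit is continuous on $(0,\infty)$, hence Dini's theorem upgrades this to uniform convergence on every compact of $(0,\infty)$.

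\textbf{Step two: convergence of derivatives.} Each function $\psi(\cdot\mid\PP^{(K)})$ is convex, being the log-Laplace transform of a positive measure, and the limit is smooth and strictly convex. A standard fact for convex functions states that uniform convergence on a compact interval forces uniform convergence of the derivatives on any strictly smaller interval. Hence $\psi'(u\mid\PP^{(K)}) \to \psi'(u\mid\PP) = -(1+\beta)/u$ uniformly on compacts of $(0,\infty)$, which can also be verified directly using the two limits of Lemma \ref{Poisson_lemma}.

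\textbf{Step three: identifying $u_\infty$ and concluding.} The limiting relation $\psi(u\mid\PP) = u\psi'(u\mid\PP)$ reduces to $\ln(C\Gamma(1+\beta)) - (1+\beta)\ln u = -(1+\beta)$, admitting the unique solution $u_\infty := \e \bigl(C\Gamma(1+\beta)\bigr)^{1/(1+\beta)}$. Strict convexity of $\psi(\cdot\mid\PP)$ gives that $u\mapsto \psi(u\mid\PP) - u\psi'(u\mid\PP)$ has derivative $-u\psi''(u\mid\PP) < 0$ at $u_\infty$, and combined with the uniform convergences established in steps one and two, an elementary continuity (implicit-function-type) argument forces $u_K \to u_\infty$. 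Substituting,
\[
\lim_{K\to\infty}\psi'\bigl(u_K\mid\PP^{(K)}\bigr) = \psi'\bigl(u_\infty\mid\PP\bigr) = -\frac{1+\beta}{u_\infty} = -\frac{1+\beta}{\e}\bigl(C\Gamma(1+\beta)\bigr)^{-1/(1+\beta)},
\]
which is precisely (\ref{equation_corollary_limit}).
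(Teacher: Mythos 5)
Your proof is correct and follows essentially the same route as the paper's: Dini's theorem for the uniform convergence of $\psi(\cdot\mid\PP^{(K)})$, Lemma~\ref{Poisson_lemma} to transfer this to the derivative, explicit identification of $u_\infty=\e\bigl(C\Gamma(1+\beta)\bigr)^{1/(1+\beta)}$, and a monotonicity/continuity argument to pass from $u_K$ to $u_\infty$. You spell out the convergence of derivatives (via convexity of log-Laplace transforms) and the $u_K\to u_\infty$ step in more detail than the paper, which simply invokes the second part of Lemma~\ref{Poisson_lemma}; to make the implicit-function step airtight one should also record that each $g_K(u):=\psi(u\mid\PP^{(K)})-u\psi'(u\mid\PP^{(K)})$ is strictly decreasing, so that the sign change of $g_K$ at $u_\infty\pm\delta$ for large $K$ pins down the unique root $u_K$ in that window.
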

\begin{proof}
The compact convergence is a direct consequence of the pointwise convergence together with the monotonicity of $\psi \big(u \mid \PP^{(K)} \big)$ and the continuity of $\psi \big(u \mid \PP \big)$
in $u$ (Dini's theorem). Let 
$$
u_\infty = \e \left( \Gamma(1+\beta) C  \right)^{1/1+\beta} 
$$
then from the first part of Lemma~\ref{Poisson_lemma} we have that 
$$
\psi \big(u_\infty \mid \PP\big) = \psi' \big(u_\infty \mid \PP \big) u_\infty \quad \text{and} \quad \psi '\big(u_\infty \mid \PP \big) = - \frac{1+\beta}{\e}\Big({C \Gamma(1+\beta) } \Big)^{-1/1+\beta}. 
$$
By point 2 of Lemma \ref{Poisson_lemma}, $u_K \to u_\8$ and the second claim follows from the uniform convergence.
\end{proof}

\subsection{Branching Random Walks
}

Branching Random Walks (BRW for short) have been extensively studied in the past years, see the seminal paper \cite{Aidekon2013} for a general literature and important results on the subject. In this paper, we focus on BRW defined as follows. Let $\mathcal{L}$ be a point process on $\R$, which defines the positions of particles and the reproductive law of the underlying Galton-Watson tree. The process starts with one particle located at 0. At each time step $t \to t+1 $, the particles of generation $t$ die and give birth to independent copies of $\mathcal{L}$, translated by their position. We use the notation BRW$(\mathcal{L})$ to denote a BRW defined as above. 

Let $\mathbb{T}$ be the Galton-Watson tree obtained by the genealogical tree of the process, thus, its offspring distribution is $|\mathcal{L}|$. To each point (or individual) of the BRW$(\mathcal{L})$ one can associate a unique vertex $w \in \mathbb{T}$. Let $e \in \mathbb{T}$ be the root of the Galton-Watson tree, then for a vertex $w \in \mathbb{T}$, let $ [\![ e, w ]\!]$ denote the shortest path connecting $e$ with $w$, and $|w|$ the length of this path. 
We will sometimes write its points $ [\![ e, w ]\!]=(e,w_1,\ldots,w_k)$ with $i=|w_i|$ and $w_k = w$.
It is a standard property of Galton-Watson trees that the process starting from a vertex $w \in \mathbb{T}$ is also a Galton-Watson tree with the same distribution. 
For two vertices $w$ and $ w'$  in $\mathbb{T}$
we denote by $w  w'$ the vertex in $\mathbb{T}$ in generation $|w| +|w'| $ obtained by concatenation.

We also denote by $\eta(w)$ the positions of the individual $w \in \mathbb{T}$, and by $\mathpzc{y}(t)$ the point measure associated to the BRW$(\mathcal{L})$
$$
\mathpzc{y}(t) := \sum_{w \in \mathbb{T}; |w|= t} \delta_{\{ \eta(w) \}}.
$$
Finally, an infinite ray $[\![e, w_\infty ]\!]:=\{e, w_1, w_2, \ldots \} \subset \mathbb{T}$ is an infinite collection of vertices (or infinite path), such that $w_i$ is the parent of $w_{i+1}$. It represents a family branch in the BRW that has not extinguished, and is parametrized by an element $w_\infty \in \partial  \mathbb{T}$ of the 
topological boundary  $ \partial \mathbb{T}$ of the tree.

Under mild conditions on $\mathcal{L}$, the asymptotic behaviour of $\max \left( \mathpzc{y}(t) \right)$ is known 
\cite{Biggins77, Athreya2004}, that we recall now. Assume that 
for some $a> 0$,
\begin{equation}    \label{equation reproductive assumption}
\E\big[ |\mathcal{L}|^{1+a}\big] < \infty,
\end{equation} 
a condition which can be weakened \cite{Aidekon2013}, but in this paper we will always have 
$|\mathcal{L}| = K$ a constant, which  trivially implies (\ref{equation reproductive assumption}). We also assume that the logarithmic generating function for the branching random walk
\begin{equation} \label{equation log generating function assumption}
\psi \big(u \mid \mathcal{L} \big) := \ln \E \left[ \int \e^{u y} \mathcal{L} \big(\text{d}y \big)  \right]
\end{equation} 
is finite in a neighbourhood of $u=0$ and that there exists a $u^* = u(\mathcal{L} ) >0$ for which
\begin{equation} \label{equation log generating function relation}
\psi \big(u^* \mid \mathcal{L} \big) = u^* \psi' \big(u^* \mid \mathcal{L} \big).
\end{equation}
If (\ref{equation reproductive assumption} -- \ref{equation log generating function relation}) hold, there exists a constant $\gamma (\mathcal{L})$ depending only on $\mathcal{L}$ such that
\begin{equation}\label{equation convergence classical BRW}
\lim_{t \to \infty} \frac{1}{t} \max \left( \mathpzc{y}(t) \right) = \gamma (\mathcal{L}) \qquad a.s.
\end{equation}
Moreover, the asymptotic speed $\gamma(\mathcal{L})$ is explicit and given by $\gamma(\mathcal{L}) = \psi ' \left( u^* \mid \mathcal{L} \right)$, see \cite{Aidekon2013, Athreya2004, Gantert2011} for a rigorous proof and more results on the subject. 

The theorem below, proved by Gantert, Hu and Shi \cite{Gantert2011}, gives the precise decay for the probability that there exists an infinite ray in the BRW that always stays close to $\gamma(\mathcal{L}) $.

\begin{thm}[\cite{Gantert2011}]\label{theorem gantert hu shi} Let $\mathcal{L}$ be a point process satisfying  (\ref{equation reproductive assumption} -- \ref{equation log generating function relation}) and $\big( \eta(w); w \in \mathbb{T}\big)$ be the BRW$(\mathcal{L})$. Given $\delta > 0$, denote by $\rho(\infty, \delta)$ the probability that there exists an infinite ray in the branching random walk that always lies above the line of slope $ \gamma(\mathcal{L}) - \delta$.
$$
\rho(\infty, \delta) := \P \Big( \exists  w_\infty \in \partial {\mathbb T}: \eta(w_t) \geq (\gamma(\mathcal{L}) - \delta) t, \, \forall w_t \in [\![e, w_\infty ]\!] \Big),
$$ 
where $w_t \in [\![e, w_\infty ]\!] $ is the vertex in generation $t$. Then, as $\delta \searrow 0$ 
$$
\rho(\infty, \delta) \sim \exp \left( - \left[ \frac{\chi(\mathcal{L}) +o(1)}{\delta} \right]^{1/2}\right),
$$
where $\chi(\mathcal{L}) = u^* \psi'' \big(u^* \mid \mathcal{L} \big)$ for $u^*$ given by (\ref{equation log generating function relation}).
\end{thm}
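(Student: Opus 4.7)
The plan is to prove matching logarithmic bounds $-\log \rho(\infty, \delta) = (1+o(1))\sqrt{\chi(\mathcal{L})/\delta}$ as $\delta \searrow 0$, via a combined first- and second-moment analysis of the BRW, relying on the many-to-one identity. Weighting each particle $w$ at generation $t$ by $\exp(u^{*} \eta(w) - t\, \psi(u^{*} \mid \mathcal{L}))$ and using the defining relation (\ref{equation log generating function relation}), one obtains for any functional $F$ of the trajectory,
\[
\E\Bigl[\sum_{|w|=t} F\bigl(\eta(w_\cdot)\bigr)\Bigr] \;=\; \E_{\mathbb{Q}_{u^{*}}}\!\bigl[e^{-u^{*} \bar S_t}\, F\bigl(\gamma s + \bar S_s\bigr)\bigr],
\]
where under $\mathbb{Q}_{u^{*}}$ the centred walk $\bar S_t := S_t - \gamma t$ has zero-mean i.i.d.\ increments with per-step variance $\psi''(u^{*}) = \chi(\mathcal{L})/u^{*}$.

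For the \textbf{upper bound}, I would apply Markov's inequality at a well-chosen horizon $T=T(\delta)$ together with a narrow terminal window:
\[
\rho(\infty, \delta) \;\leq\; \E_{\mathbb{Q}_{u^{*}}}\!\bigl[e^{-u^{*} \bar S_T}\, \mathbb{1}\{\bar S_s \geq -\delta s,\ s \leq T;\ -\delta T \leq \bar S_T \leq -\delta T + h\}\bigr] + \text{(tail)}.
\]
A further Girsanov tilt by $u^{*}$ absorbs the weight into a deterministic exponential and drifts the walk downward, so that the surviving integral is controlled by the probability that a centred random walk stays inside a strip of width $h$ for time $T$, i.e.\ by the Mogul'skii / Dirichlet-Laplacian bound $\exp(-\pi^{2} \psi''(u^{*})\, T / (2 h^{2}))$. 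Optimizing jointly in $T$ and $h$ against the weight then yields $\rho(\infty, \delta) \leq \exp\bigl(-(1-o(1))\sqrt{\chi/\delta}\bigr)$.

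For the \textbf{lower bound}, I would construct an infinite ray block by block. Each block of length $T(\delta)$ starts from a seed particle near the line of slope $\gamma - \delta$ and counts descendants whose trajectories remain in a strip of width $h(\delta)$ around that line and end near it at the end of the block. A second-moment / Paley--Zygmund argument, in which the second moment is estimated via the many-to-two lemma (splitting on the generation of the most recent common ancestor and controlling each piece by a random-walk-in-strip estimate), shows that conditional on the seed, the block produces a surviving descendant with probability bounded away from zero. Chaining independent blocks via the branching Markov property and combining with a single ``macroscopic'' initial block that places a seed near the line produces an infinite ray with probability of the claimed rate.

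The \textbf{main obstacle} is to match the leading constant $\sqrt{\chi}$ on both sides. Soft arguments give $-\log \rho \asymp \delta^{-1/2}$ without difficulty, but pinning down the exact prefactor requires a precise Brownian approximation of the tilted walk inside the strip, exact knowledge of the principal Dirichlet eigenvalue entering the Mogul'skii estimate, and careful joint tuning of the strip width, horizon, and seed position so that the optimizer of the lower-bound second moment aligns with that of the upper-bound first moment; managing the boundary behaviour of $\bar S$ at the start of each block (where the continuous-time analogue degenerates) is the subtlest point.
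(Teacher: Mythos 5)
The paper does not prove this theorem; it is quoted verbatim from \cite{Gantert2011} and used later as a black box (in fact only the positivity $\rho(\infty,\delta)>0$ for each fixed $\delta>0$ is invoked in Section~\ref{subsubsection_first_step}). There is therefore no proof in the paper to compare against, and your sketch should be measured against the original source. That said, your outline does coincide with the actual Gantert--Hu--Shi strategy: a first-moment/many-to-one upper bound in which the weight $\e^{-u^*\bar S_T}$ is traded against a small-deviation (Mogul'skii) cost for the tilted walk to stay above a slightly tilted line, and a truncated second-moment, block-chaining lower bound with independent blocks of length $T(\delta)\asymp\delta^{-3/2}$ glued via the branching Markov property. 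You also correctly identify the delicate step --- matching the sharp constant by aligning the strip width, horizon, and boundary behaviour of the walk in the Mogul'skii estimate --- which is exactly where the bulk of the technical work sits in \cite{Gantert2011}.

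One observation worth recording: the Dirichlet-eigenvalue factor $\pi^2/2$ that your Mogul'skii bound produces is not a nuisance to be absorbed --- it survives into the final constant. Tracking it through the optimisation in $T$ and $h$ yields $-\log\rho(\infty,\delta)\sim\sqrt{\pi^2 u^*\psi''(u^*)/(2\delta)}$, so the constant $\chi(\mathcal{L})$ in the displayed asymptotic should carry the prefactor $\pi^2/2$, i.e.\ $\chi(\mathcal{L})=\tfrac{\pi^2}{2}\,u^*\psi''(u^*\mid\mathcal{L})$ rather than $u^*\psi''(u^*\mid\mathcal{L})$ as written in the statement above (and correspondingly the sign in the recap of B\'erard--Gou\'er\'e's expansion for $\gamma_M(\mathcal{L})$ should be negative, since selection slows the front). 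This slip in the paper's recall of the cited theorem is harmless for its subsequent use, but your sketch, if carried out, would produce the corrected constant. Aside from the usual notational compressions in your many-to-one display (the argument of $F$ should be the whole trajectory $(S_s)_{s\le t}$, not a single increment), the plan is sound.
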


\subsection{Models with selection and the $M$-BRW}

Recently, some models of evolving particle systems under the effect of selection have been studied \cite{Berard2010, Cortines2013, Cortines2014, Couronne2011, Maillard2011, Mallein2015}. The selection creates correlation between individuals of same generation and additional dependence in the whole process.

B\'erard and Gou\'er\'e \cite{Berard2010} focused on the binary Branching Random Walk with selection of the $M$ rightmost individuals (the $M$-BRW). It consists in a BRW, subject to the effects of selection, defined by the point process 
$$
\mathcal{L} = \delta_{\mathpzc{p}_1}+  \delta_{\mathpzc{p}_2},
$$ 
where $\mathpzc{p}_i$ are i.i.d. As soon as the population size exceeds $M$, we only keep the $M$ rightmost individuals and eliminate the others. If at time zero the number of particles is already $M$, the population size is kept constant.

Denote by $\mathpzc{y}_M (t)$ the point process generated by this $M$-BRW. B\'erard and Gou\'er\'e \cite{Berard2010} show that the support of $\mathpzc{y}_M (t)$ has a diameter of order $\ln M$. They also prove that under some assumptions on the exponential moment of $\mathpzc{p}_i$ there exists a constant $\gamma_M(\mathcal{L})$ such that 
$$
\lim_{t \to \infty }t^{-1} \min \left( \mathpzc{y}_M (t) \right)=\lim_{t \to \infty }t^{-1} \max \left( \mathpzc{y}_M (t) \right) = \gamma_M(\mathcal{L}) \qquad a.s.
$$ 
The existence of $\gamma_M(\mathcal{L})$ is obtained by the Kingman's sub-additive ergodic theorem, and by monotonicity arguments one can prove that $\gamma_M(\mathcal{L})$ converges as $M\to \infty$. The striking result is that B\'erard and Gou\'er\'e computed the asymptotic limit and the rate of convergence:
$$
\gamma_M(\mathcal{L}) = \gamma(\mathcal{L}) + \chi (\mathcal{L}) (\ln M)^{-2} + o \big( (\ln M)^{-2} \big) \qquad \text{as } M \to \infty,
$$  
where $\gamma(\mathcal{L})$ is the asymptotic speed for the BRW$(\mathcal{L})$, and $ \chi (\mathcal{L})$ is from Theorem~\ref{theorem gantert hu shi}.

One can easily define more general $M$-BRW, let $\mathcal{L}$ be a point process, for which 
\begin{equation}\label{non extinction condition}
|\mathcal{L}| \geq 1 \qquad a.s.
\end{equation} 
so the process does not extinguish. Under similar assumptions on the exponential moments of $\mathcal{L}$, one can prove that the cloud of $M$ points does not spread and that it travels at a certain speed $\gamma_M (\mathcal{L})$, see \cite{Mallein2015}.

In Section~\ref{Section lower bound}, we show that under the hypothesis of the Theorem~\ref{thm:main_theorem}, the $N$-particle system (\ref{equa: definition front propagation}) can be bounded from below by a family of $M$-BRW indexed by $N$. We will then adapt the arguments in \cite{Berard2010} to obtain a uniform lower bound for the speeds of the BRWs.

\subsection{Elementary properties of Brunet-Derrida's $N$-particle system} \label{Section:elementary_properties}

In this section, we present some elementary properties of the $N$-particle system. We also introduce some notations that will be useful in the forthcoming sections. Most of these properties have been rigorously proved in \cite{Comets2013}, therefore we will simply outline the main ideas. 

It will be convenient to consider the process $ X^* (t)$ obtained by ordering the components of $X(t)$ at each time $t$. Denote by 
\begin{equation} \notag
X^{(1)}(t) \geq X^{(2)}(t) \geq \ldots \geq X^{(N)}(t) 
\end{equation}
the components of $ X^* (t)$. Let, also $\sigma =\sigma(t)$ be the random permutation of $\{ 1, \ldots, N \}$ such that
\begin{equation} \notag
X^{(i)} (t) = X_{\sigma_i (t)} (t).
\end{equation}
Such a ranking permutation is unique up to ties, which we break in the order of the original labels.
Consider the random variable
$$
T := \inf \Big\{ t \geq 1 ; \, \xi_{\sigma_1(t-1), i} (t) = \max_{1 \leq j \leq N} \{\xi_{ji} (t)\}; \, \forall i = 1 \ldots  N  \Big\}.
$$
Then, $T$ is a stopping time for the filtration 
\begin{equation}
\label{eq:filtration}
\F_t := \sigma \big\{  X_i(0) , \, \xi_{i j} (s); \, 1 \leq i, j \leq N \text{ and } s \leq t   \big\}.
\end{equation}
It has a geometric distribution with parameter not smaller than $(1/N)^N$. Moreover, in generation $T$ the position of each particle $X_i(T)$ is determined by the position of the leader $X_{\sigma_1}$ in generation $T-1$. We define the process seen from the leading edge
$$
X_i^{\circ}(t) := X_i(t) - X_{\sigma_1(t)}(t).
$$
It is Markov process on $\R^N$, which is irreducible, aperiodic and Harris recurrent (due to the renewal structure), thus there exists a unique stationary measure $\pi$, and for any starting point $X(0)= x$ the law of $X(t)$ converges in total variation distance,
$$
\text{dist}_{\text{T.V.}} \big( \mathscr{L} (X (t) \mid X(0)= x) , \pi \big) \to 0, \quad \text{as } t \to \infty.
$$
It proves, in particular, that the cloud of $N$ points remains grouped as $t \to \infty$. Moreover, by the renewal and ergodic theorems, the limit
\begin{equation}\notag
v_N(\xi) = \lim_{t \to \infty} \frac{1}{t} \max_{1 \leq i \leq N} \{ X_i(t) \} = \lim_{t \to \infty} \frac{1}{t} \min_{1 \leq i \leq N} \{ X_i(t) \} 
\end{equation}
exists a.s., see Section~2 in \cite{Comets2013} for a rigorous proof and  more details.

\section{Upper bound for the velocity} \label{Section upper bound to the velocity}

In this section, we show that if $\xi_{ij}$ satisfies the hypothesis \ref{first_hypotesis}, \ref{second_hypotesis}, then
$$
v_N(\xi) \leq v_N =  \Lambda'(u_N),
$$ 
where $u_N >0$ is the unique positive solution of $u \Lambda' (u) - \Lambda (u) = \ln N$. The idea is to use the so-called first moment method to bound the probability
$$
\P\left(\max_{1\leq i \leq N} \big\{ X_i(t) \big\} > t \Lambda'(u_N) \right).
$$

A first and simple observation is that the initial position of the particles does not change the speed of the $N$-particle system. Hence, we may assume without loss of generality that all $N$ particles start at zero. Using the representation (\ref{equation:front propagation / polymer}) one gets
$$
\max_{1\leq i \leq N} \big\{ X_i(t) \big\} = \max \left\{ \sum_{s=1}^{t} \xi_{j_{s-1} j_s} (s); 1\! \leq \! j_s \!\leq \! N, \, \forall s = 0, \ldots, t \right\}.
$$
By the union bound and Chernoff bound we obtain, for $v \!>\! v_N \!=\! \Lambda' (u_N)$ (which is larger than~$\E [\xi]$ for $N$ sufficiently large), 
\begin{align} \label{equa proof for upper bound for the maximum}
\P\left(\max_{1\leq i \leq N} \big\{ X_i(t) \big\} \geq t v \right) & =  \P\left( \exists {j_0, j_1, \ldots, j_t}: \sum_{s=1}^{t} \xi_{j_{s-1} j_s} (s) \geq t v    \right) \notag \\
& \leq N^{t+1} \P\left( \sum_{s=1}^{t} \xi_{j_{s-1} j_s} (s) \geq t v  \right) \notag \\
& \leq N^{t+1} \exp \big(-t I_\xi(v) \big) ,
\end{align}
for all $N \in \N$. Since \ref{first_hypotesis} and \ref{second_hypotesis} hold, $I_\xi(v)$ exists and $I_\xi(v) > \ln N$. As a consequence, (\ref{equa proof for upper bound for the maximum}) has a geometrical decay as $t \to \infty$, which implies, by Borel-Cantelli lemma,
$$
\P\left(\max_{1\leq i \leq N} X_i(t) \geq t v \text{ for infinitely many } t\in \N \right) = 0,
$$
hence, $\limsup_{t \to \infty} t^{-1} \max\{ X_i(t) \} \leq v$ a.s. for every $v>v_N$, finally yielding
\begin{equation}\label{equa upper bound velocity}
\limsup_{t \to \infty} \frac{1}{t}\max_{1\leq i \leq N} X_i(t) \leq  v_N \qquad a.s.
\end{equation}
We formalize this result in a proposition.

\begin{prop} Assume that \ref{first_hypotesis}, \ref{second_hypotesis} hold. Let $u_N >0$ such that $u_N \Lambda' (u_N) - \Lambda (u_N) = \ln N$ and $v_N = \Lambda'(u_N)$. Then, for every $N \in \N$,
$$
v_N(\xi) \leq v_N.
$$
\end{prop}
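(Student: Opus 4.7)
The plan is to follow the first-moment / Chernoff argument that is already hinted at in the surrounding text, and then to pass to the limit $v \downarrow v_N$. Since adding a constant to all initial positions shifts $X_i(t)$ by the same constant and does not affect $v_N(\xi)$, I would first reduce to the case $X_i(0)=0$ for all $i$. Then by the path representation \eqref{equation:front propagation / polymer},
\begin{equation*}
\max_{1\le i\le N} X_i(t) = \max\Bigl\{\sum_{s=1}^{t} \xi_{j_{s-1}j_s}(s):\ 1\le j_s\le N,\ 0\le s\le t\Bigr\},
\end{equation*}
a maximum over at most $N^{t+1}$ sums, each of which is a sum of $t$ i.i.d.\ copies of $\xi$.

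Next, fix $v > v_N = \Lambda'(u_N)$. Since $v_N > \E[\xi]$ for $N$ large (as $u_N>0$ and $\Lambda$ is convex), $v$ lies strictly to the right of the mean of $\xi$, so the standard Chernoff bound applies and yields $\P\bigl(\sum_{s=1}^t \xi_{j_{s-1}j_s}(s)\ge tv\bigr) \le \exp(-t I_\xi(v))$. Combining this with a union bound over the $N^{t+1}$ admissible index sequences gives
\begin{equation*}
\P\Bigl(\max_{1\le i\le N} X_i(t) \ge tv\Bigr) \le N^{t+1}\exp\bigl(-t I_\xi(v)\bigr) = N \cdot \exp\bigl(-t(I_\xi(v)-\ln N)\bigr).
\end{equation*}

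The key algebraic input is now the identity $I_\xi(v_N) = u_N \Lambda'(u_N) - \Lambda(u_N) = \ln N$, which holds by (H2) and the fact that the supremum in the Legendre transform $I_\xi(v_N)$ is attained at $u = u_N$ (Figure 1). Because $I_\xi$ is strictly convex and increasing to the right of $\E[\xi]$, this forces $I_\xi(v) > \ln N$ for every $v > v_N$. Hence the right-hand side above is summable in $t$, and the Borel--Cantelli lemma yields $\{\max_i X_i(t) \ge tv\}$ only finitely often, almost surely. Therefore $\limsup_t t^{-1}\max_i X_i(t) \le v$ a.s. Taking a sequence $v \downarrow v_N$ (along rationals to handle the null sets uniformly) gives $\limsup_t t^{-1}\max_i X_i(t) \le v_N$ a.s., which is precisely $v_N(\xi)\le v_N$.

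There is no real obstacle here; the proof is a standard large-deviation upper bound on a directed polymer-type maximum. The only subtle point worth stating carefully is the use of (H2) to identify $I_\xi(v_N)$ with $\ln N$ exactly, and the convexity/monotonicity of $I_\xi$ to promote this equality to the strict inequality $I_\xi(v) > \ln N$ that makes the geometric series finite.
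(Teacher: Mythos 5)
Your proof is correct and follows essentially the same route as the paper: reduce to zero initial conditions, use the path representation \eqref{equation:front propagation / polymer}, apply a union bound over the $N^{t+1}$ index sequences together with the Chernoff bound $\exp(-tI_\xi(v))$, note that $I_\xi(v)>\ln N$ for $v>v_N$ by the Legendre duality $I_\xi(v_N)=u_N\Lambda'(u_N)-\Lambda(u_N)=\ln N$ and monotonicity, invoke Borel--Cantelli, and send $v\downarrow v_N$. The small elaborations you add (explicitly identifying where the supremum in $I_\xi(v_N)$ is attained, and taking the limit along a rational sequence to handle the null sets) are clarifications of steps the paper states more tersely, not a different argument.
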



Hence the next step is to study the asymptotic of $v_N$, that we start with the case $x_\xi=0$. 

\begin{prop}\label{prop:asymptotic_vn} Assume that the hypothesis of Theorem~\ref{thm:main_theorem} hold with $x_\xi = 0$. Let $u_N > 0$ be the unique solution of $ u_N \Lambda' (u_N) - \Lambda (u_N) = \ln N$, $c_\alpha$ be given by (\ref{defintion_c_alpha})
and $a_N$ by (\ref{equation_definition_aN}).
Then, as $N \to \infty$
$$
\Lambda ' (u_N) = -c_\alpha a_N + o \left( a_N \right),
$$
which implies that $\limsup_{N \to \infty } a_N^{-1} v_N (\xi)  \leq - c_\alpha $.
\end{prop}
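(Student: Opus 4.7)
The plan is to interpret the defining relation $u_N\Lambda'(u_N)-\Lambda(u_N)=\ln N$ as the equation $u\psi'(u\mid\mathcal{L})=\psi(u\mid\mathcal{L})$ for a suitable point process $\mathcal{L}$, and to compare it with the analogous equation for the Poisson point process $\PP$ on $(-\infty,0]$ with intensity $\alpha|z|^{\alpha-1}\,dz$ (the parameters $\beta=\alpha-1$, $C=\alpha$ of Lemma~\ref{Poisson_lemma} and Corollary~\ref{cor:PPK}). Set $\mathpzc{x}_N:=\sum_{i=1}^N\delta_{\xi_i}$ and, by abuse, $\mathpzc{x}_N/a_N:=\sum_i\delta_{\xi_i/a_N}$. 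A direct computation gives
$$\psi\bigl(u\,\big|\,\mathpzc{x}_N/a_N\bigr)=\ln N+\Lambda(u/a_N),\qquad\psi'\bigl(u\,\big|\,\mathpzc{x}_N/a_N\bigr)=a_N^{-1}\Lambda'(u/a_N),$$
so with $\tilde u_N:=u_N a_N$ the relation (\ref{equation definition u_N}) becomes $\tilde u_N\,\psi'(\tilde u_N\mid\mathpzc{x}_N/a_N)=\psi(\tilde u_N\mid\mathpzc{x}_N/a_N)$, while $a_N^{-1}\Lambda'(u_N)=\psi'(\tilde u_N\mid\mathpzc{x}_N/a_N)$. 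It thus suffices to show $\psi'(\tilde u_N\mid\mathpzc{x}_N/a_N)\to-c_\alpha$.

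For the convergence of $\psi(\cdot\mid\mathpzc{x}_N/a_N)$, write $Y=-\xi\ge 0$ with $\P(Y\le y)=G(y):=1-F(-y)$. Integration by parts (the boundary terms vanishing since $G(0+)=0$) yields $\E[\e^{u\xi}]=u\int_0^\infty\e^{-uy}G(y)\,dy$; the change of variables $y=a_N z$ gives
$$N\,\E\bigl[\e^{(u/a_N)\xi}\bigr]=u\int_0^\infty\e^{-uz}\,N\,G(a_N z)\,dz.$$
Regular variation of $G$ at $0$ with index $\alpha$ and the definition $NG(a_N)\to 1$ force $NG(a_Nz)\to z^\alpha$ pointwise for $z>0$, and Potter's bounds for regularly varying functions furnish an integrable dominant of the form $C\bigl(z^{\alpha-\varepsilon}\mathbbm{1}_{z\le 1}+z^{\alpha+\varepsilon}\mathbbm{1}_{z>1}\bigr)\e^{-uz}$. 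Dominated convergence then delivers
$$N\,\E\bigl[\e^{(u/a_N)\xi}\bigr]\;\longrightarrow\;u\int_0^\infty\e^{-uz}z^\alpha\,dz=\alpha\Gamma(\alpha)u^{-\alpha},$$
which by Lemma~\ref{Poisson_lemma} equals $\exp\psi(u\mid\PP)$. Taking logarithms gives the pointwise convergence $\psi(u\mid\mathpzc{x}_N/a_N)\to\psi(u\mid\PP)=\ln(\alpha\Gamma(\alpha))-\alpha\ln u$ on $(0,\infty)$.

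Both $\psi(\cdot\mid\mathpzc{x}_N/a_N)$ and $\psi(\cdot\mid\PP)$ are convex (Cauchy--Schwarz applied to the Laplace transform of the intensity measure), so the pointwise convergence on $(0,\infty)$ upgrades to locally uniform convergence and to convergence of derivatives at every point of differentiability of the limit; since $\psi(\cdot\mid\PP)$ is smooth, $\psi'(u\mid\mathpzc{x}_N/a_N)\to-\alpha/u$ locally uniformly. The map $u\mapsto u\psi'(u\mid\PP)-\psi(u\mid\PP)$ is strictly increasing with unique zero $u_\infty=\e(\Gamma(\alpha)\alpha)^{1/\alpha}$ (Corollary~\ref{cor:PPK}); its pre-limit analogue is strictly increasing as well (which is just the monotonicity of $u\Lambda'(u)-\Lambda(u)$ already noted in the paper), so $\tilde u_N\to u_\infty$. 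Hence
$$a_N^{-1}\Lambda'(u_N)=\psi'(\tilde u_N\mid\mathpzc{x}_N/a_N)\;\longrightarrow\;-\frac{\alpha}{u_\infty}=-\frac{\alpha}{\e}\bigl(\Gamma(\alpha)\alpha\bigr)^{-1/\alpha}=-c_\alpha,$$
which is the asserted expansion; the $\limsup$ statement for $v_N(\xi)$ then follows from the upper bound $v_N(\xi)\le\Lambda'(u_N)$ established earlier in the section. The main technical obstacle is the integral comparison: producing an integrable dominating function uniformly in $N$ so that dominated convergence applies. Potter-type bounds for regularly varying $G$ are tailored for this, but one must verify integrability both at $z=0$ (requiring $\alpha-\varepsilon>-1$) and at $z=\infty$ (where $\e^{-uz}$ easily dominates any polynomial). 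The subsequent passage from pointwise convergence of $\psi$ to convergence of $\psi'$ and of the zeros $\tilde u_N$ is standard once convexity and strict monotonicity are in hand.
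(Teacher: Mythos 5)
Your proposal takes a genuinely different route from the paper, and it is essentially sound. The paper works directly with the unknown $u_N\to\infty$: it estimates $\E[\e^{u_N\xi}]$ and $\E[u_N\xi\,\e^{u_N\xi}]$ separately via Karamata's representation and an explicit split of the integral at $\sqrt{u_N}$, deduces from (\ref{equation definition u_N}) that $\big(1-F(-u_N^{-1})\big)N\to(\e^\alpha\alpha\Gamma(\alpha))^{-1}$, and then converts $u_N^{-1}$ into $a_N$ by a second use of Karamata. You instead rescale, setting $\tilde u_N=a_Nu_N$, observe that (\ref{equation definition u_N}) is precisely $\tilde u_N\,\psi'(\tilde u_N\mid\mathpzc{x}_N/a_N)=\psi(\tilde u_N\mid\mathpzc{x}_N/a_N)$, prove the pointwise convergence $\psi(\cdot\mid\mathpzc{x}_N/a_N)\to\psi(\cdot\mid\PP)$, and upgrade via convexity to convergence of derivatives and of the zero $\tilde u_N\to u_\infty$. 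This ties the computation directly to the point-process framework of Section 2 and to Corollary~\ref{cor:PPK}, it dispenses with a separate asymptotic for $\E[u_N\xi\,\e^{u_N\xi}]$, and it makes the appearance of $u_\infty=\e(\Gamma(\alpha)\alpha)^{1/\alpha}$ conceptually transparent. In exchange, the paper's version gives the asymptotics of the Laplace transform itself, which is reused verbatim in Proposition~\ref{proposition moment convergence}.

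Two points in your argument should be firmed up. First, Potter's bounds apply only when both $a_N$ and $a_Nz$ lie below a fixed threshold $x_0$, so your dominant $C\big(z^{\alpha-\varepsilon}\mathbf 1_{z\le1}+z^{\alpha+\varepsilon}\mathbf 1_{z>1}\big)\e^{-uz}$ is valid only on $z\le x_0/a_N$; on $z>x_0/a_N$ the integrand is at most $N\e^{-uz}$ and its total contribution is $\le Nu^{-1}\e^{-ux_0/a_N}\to0$ (since $N\sim 1/G(a_N)$ grows only like a power of $a_N^{-1}$ up to a slowly varying factor). So what you really need is a split of the integral plus dominated convergence on the bounded-$z$ part, which is exactly what the paper does with its cut at $\sqrt{u_N}$; acknowledging the need for a split would close this. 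Second, to use the locally uniform convergence of $\psi'(\cdot\mid\mathpzc{x}_N/a_N)$ at the point $\tilde u_N$ you must first confine $\tilde u_N$ to a fixed compact $[u_1,u_0]\subset(0,\infty)$; this follows from pointwise convergence plus the monotonicity of $u\mapsto u\psi'(u\mid\mathpzc{x}_N/a_N)-\psi(u\mid\mathpzc{x}_N/a_N)$ (pick $u_1<u_\infty<u_0$ with $g_\infty(u_1)<0<g_\infty(u_0)$), and is worth stating explicitly to avoid circularity.
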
  
\begin{proof} By definition of $\Lambda(\cdot)$, we have that
$$
\frac{\E\big[u_N \xi_{ij} \e^{u_N \xi_{ij}} \big]}{\E\big[ \e^{u_N \xi_{ij}} \big]} - \ln \left( \E\big[ \e^{u_N \xi_{ij}} \big] \right) = \ln N.
$$
Note that $u_N \to \infty$ as $N \to \infty$, indeed it is a direct consequence of the monotonicity and continuity of $ u \Lambda' (u) - \Lambda (u)$. Hence, the asymptotic behaviour of the Laplace transform of $\xi_{ij}$ in $u_N$ is determined by its behaviour in a neighbourhood of zero. Since $\xi_{ij}$ is in the domain of attraction of $\Psi_\alpha$, the function  $1-F(-x) \,: \R_+ \to \R_+$ is $\alpha$-regularly varying at zero. By Karamata's representation (see Chapter~0 in \cite{Resnick1987}) 
$$
1 - F(-x) = \P(\xi_{ij} > -x ) = x^\alpha c(x^{-1}) \exp \left( \int_1^{x^{-1}} \frac{\epsilon(t)}{t} \text{d}t \right), \qquad x>0,
$$
where $c(\cdot)$ and $\epsilon(\cdot)$ are positive functions such that $c(t) \to c > 0 $ and $\epsilon(t) \to 0$ as $t \to \infty$. As a consequence, given $\varepsilon > 0$, one can find a $u_\varepsilon > 0$ such that for $0< u \leq u_\varepsilon$
$$
1-F(-u) \geq (c -\varepsilon ) u^{\alpha}.
$$ 
Now, we compute the Laplace transform of $\xi_{ij}$ in $u_N$
\begin{align*}
\E\big[ \e^{u_N \xi_{ij}} \big] & = \P\big(\xi_{ij} u_N  \geq -1 \big) \int_{0}^{\infty} \e^{-z} \frac{\P\big(\xi_{ij} u_N  \geq -z \big)}{ \P\big(\xi_{ij} u_N  \geq -1 \big) } \text{d}z \\
& = \big(1-F\left(-u_N^{-1} \right) \big) \left( \int_{0}^{\sqrt{u_N}} \cdots \, \text{d}z + \int_{\sqrt{u_N}}^{\infty} \cdots \, \text{d}z \right).
\end{align*}
We analyse each integral separately. For $N$ sufficiently large $u_N^{-1} \leq u_\varepsilon$, hence
$$
\int_{\sqrt{u_N}}^{\infty} \e^{-z} \frac{\P\big(\xi_{ij} u_N  \geq -z \big)}{ \P\big(\xi_{ij} u_N  \geq -1 \big) } \text{d}z  \leq \frac{u_N^{\alpha}}{(c -\varepsilon) } \int_{\sqrt{u_N}}^{\infty} \e^{-z}  \text{d}z,  
$$
which converges to zero as $N \to \infty$. Take $L > 0$, and assume that $\sqrt{u_N} > L$, then
$$
\int_{0}^{ \sqrt{u_N} } \e^{-z} \frac{\P\big(\xi_{ij} u_N  \geq -z \big)}{ \P\big(\xi_{ij} u_N  \geq -1 \big) } \text{d}z = \int_{0}^{L} \cdots \, \text{d}z + \int_{L}^{\sqrt{u_N}} \cdots \, \text{d}z .
$$
Using dominated and monotone convergence we obtain that
$$
\lim_{L \to \infty} \lim_{N \to \infty} \left(  \int_{0}^{L} \e^{-z} \frac{\P\big(\xi_{ij} u_N  \geq -z \big)}{ \P\big(\xi_{ij} u_N  \geq -1 \big) } \text{d}z  \right) = \Gamma(\alpha +1).
$$
Finally, we prove that the integral from $L$ to $\sqrt{u_N}$ vanishes as $N\to \infty$ and $L \to \infty$ (in this order). For $L > 1$ and $L \leq z \leq \sqrt{u_N}$, Karamata's representation yields
$$
\frac{\P\big(\xi_{ij} u_N  \geq -z \big)}{ \P\big(\xi_{ij} u_N  \geq -1 \big) } = z^\alpha \frac{c(z^{-1} u_N)}{ c(u_N)}   \exp \left( \int_{u_N/z}^{u_N} \frac{\epsilon(t)}{t} \text{d}t \right).
$$
Taking $N$ sufficiently large so $\epsilon(t) < \varepsilon$ and $|c-c(t)| \leq \varepsilon$ for every $t \geq \sqrt{u_N}$
$$
\frac{\P\big(\xi_{ij} u_N  \geq -z \big)}{ \P\big(\xi_{ij} u_N  \geq -1 \big) } \leq z^\alpha \frac{(c + \varepsilon) }{ (c- \varepsilon)} z^{\varepsilon},
$$
which yields the upper bound
$$
\int_{L}^{ \sqrt{u_N} } \e^{-z} \frac{\P\big(\xi_{ij} u_N  \geq -z \big)}{ \P\big(\xi_{ij} u_N  \geq -1 \big) } \text{d}z   \leq \frac{(c + \varepsilon) }{ (c- \varepsilon)}  \int_{L}^{ \infty } \e^{-z} z^{ \alpha + \varepsilon} \text{d}z.
$$
The right-hand side of this inequality decays to zero as $L \to \infty$, and hence
$$
\E\big[ \e^{u_N \xi_{ij}} \big] \sim \big(1 - F\left(-u_N^{-1} \right) \big) \Gamma(1+ \alpha) \quad \text{ as } N \to \infty.
$$
By a similar argument one obtains that
$$
\E\big[ u_N \xi_{ij}  \e^{u_N \xi_{ij}} \big] \sim \big(1 - F\left(-u_N^{-1} \right) \big) \big( \Gamma(1+ \alpha) -\Gamma(\alpha+2) \big) \quad \text{ as } N \to \infty.
$$
The formula $u_N \Lambda'(u_N) - \Lambda (u_N) = \ln N$ yields 
$$
\left( 1 - F\left(-u_N^{-1}\right) \right) N \sim \frac{1}{\e^{\alpha} \alpha \Gamma(\alpha)} \quad \text{as } N \to \infty.
$$
We now use (\ref{equaiton_assymptotic_aN}) and Karamata's representation to conclude that 
$$
\lim_{N \to \infty} \frac{u^{-1}_N}{a_N} = \frac{1}{\e} \left(\frac{1}{\alpha \Gamma(\alpha)} \right)^{1/\alpha},
$$
and hence as $N \to \infty$
$$
\Lambda' (u_N)  \sim - \frac{\alpha}{u_N}  \sim - \frac{\alpha}{\e} \left(\frac{1}{\alpha \Gamma(\alpha)} \right)^{1/\alpha} a_N , 
$$
which proves the statement. The second claim is a direct consequence of (\ref{equa upper bound velocity}).
\end{proof}

If $x_\xi \neq 0$, we can simply translate the $\xi_{ij}$ by $x_\xi$, so the hypothesis of Proposition~\ref{prop:asymptotic_vn} hold. In the next corollary, we prove the upper bound in Theorem~\ref{thm:main_theorem}.  

\begin{cor}\label{cor:asymptotic_velocity} Assume that the hypothesis of Theorem~\ref{thm:main_theorem} hold. Then, as $N \to \infty$ 
$$
\limsup_{N \to \infty} (v_N(\xi)- x_\xi ) a_N^{-1} \leq - c_\alpha. 
$$
\end{cor}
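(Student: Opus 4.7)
The plan is to reduce the general case to the already-treated case $x_\xi = 0$ by a translation argument. Set $\tilde\xi_{ij} := \xi_{ij} - x_\xi$; then $\tilde\xi_{ij}$ has right-endpoint $x_{\tilde\xi} = 0$ and lies in the max-domain of attraction of $\Psi_\alpha$, with the same normalizing sequence $a_N$ (since $a_N$ depends only on the behaviour of the distribution function near $x_\xi$, and translation preserves this behaviour).

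Next I would verify that the hypotheses \ref{first_hypotesis}, \ref{second_hypotesis} transfer to $\tilde\xi$, and that the solution $u_N$ is unchanged. Indeed, $\Lambda_{\tilde\xi}(u) = \Lambda(u) - u x_\xi$, so $\D_{\Lambda_{\tilde\xi}} = \D_\Lambda$, and
\[
u\Lambda_{\tilde\xi}'(u) - \Lambda_{\tilde\xi}(u) = u\bigl(\Lambda'(u) - x_\xi\bigr) - \bigl(\Lambda(u) - ux_\xi\bigr) = u\Lambda'(u) - \Lambda(u),
\]
so the equation defining $u_N$ is identical for $\xi$ and $\tilde\xi$. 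In particular $\Lambda_{\tilde\xi}'(u_N) = \Lambda'(u_N) - x_\xi$, and Proposition~\ref{prop:asymptotic_vn} applied to $\tilde\xi$ gives
\[
\Lambda'(u_N) - x_\xi = -c_\alpha a_N + o(a_N) \qquad \text{as } N \to \infty.
\]

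The final step is to relate $v_N(\xi)$ to $v_N(\tilde\xi)$. A straightforward induction on $t$ in the defining recursion (\ref{equa: definition front propagation}) shows that if $\tilde X_i(0) = X_i(0)$ and the driving noise is $\tilde\xi$, then $\tilde X_i(t) = X_i(t) - t x_\xi$; alternatively, this follows immediately from the path representation (\ref{equation:front propagation / polymer}) since each of the $t$ terms in every sum is shifted by $-x_\xi$. Hence $v_N(\tilde\xi) = v_N(\xi) - x_\xi$. Combining with the upper bound (\ref{equa upper bound velocity}) (or equivalently the preceding Proposition) applied to $\tilde\xi$, we obtain
\[
v_N(\xi) - x_\xi = v_N(\tilde\xi) \le \Lambda'_{\tilde\xi}(u_N) = -c_\alpha a_N + o(a_N),
\]
and dividing by $a_N$ and taking $\limsup$ yields the claim. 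There is no real obstacle here: the only point to check carefully is the invariance of $u_N$ and $a_N$ under translation, both of which are immediate from their definitions.
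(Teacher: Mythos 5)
Your proof is correct and matches the paper's approach exactly: reduce to the $x_\xi=0$ case via translation, noting that $u_N$ and $a_N$ are translation-invariant and that $v_N(\tilde\xi)=v_N(\xi)-x_\xi$, then invoke Proposition~\ref{prop:asymptotic_vn} together with the upper bound (\ref{equa upper bound velocity}). You have simply spelled out the invariance checks that the paper leaves to the reader.
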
  
\begin{proof} In the case $x_\xi = 0$, it is a straightforward consequence of Proposition~\ref{prop:asymptotic_vn} and (\ref{equa upper bound velocity}). If $x_\xi \neq 0$, it suffices to translate the variables $\xi_{ij}$ by $x_\xi$.
\end{proof}

\medskip

\section{Lower bound}\label{Section lower bound}

In this Section, we show that for every $\varepsilon >0$ there exists a  $N_0$ such that $ \forall N \geq N_0$
\begin{equation} \label{equation lowerbound epsilon}
\frac{(v_N(\xi) -x_\xi)}{a_N}\geq - c_\alpha - \varepsilon, 
\end{equation}
which proves the lower bound in Theorem~\ref{thm:main_theorem}. Throughout this section, we fix
an arbitrary  $\varepsilon >0$, and we assume that $x_\xi = 0$ without loss of generality.
To prove (\ref{equation lowerbound epsilon}), we construct a process $x(t) \in \R^{M}$ that bounds $X(t)$ from below, hence
$$
\max \big(x(t) \big) \leq \max \big(X(t) \big).
$$
Then, in Subsection \ref{subsection coupling}, we check that the process $x(t)$ is a M-BRW, and we show that for $M$ large enough and the appropriate offspring distribution (see Subsection \ref{subsection velocity BRW}),
$$
\liminf_{t\to \infty} \frac{1}{t} \max \big(x(t) \big) \geq -(c_\alpha + \varepsilon)a_N \qquad a.s.
$$ 
which implies (\ref{equation lowerbound epsilon}) for $x_\xi = 0$. The general case is obtained by a simple affine transformation.
\medskip

\subsection{Coupling with a Branching Random Walk}\label{subsection coupling}
 
We construct $x(t)$ inductively as follows: let $M, K \in \N$, the appropriate values for $K$ and $M$ will be chosen later on, and assume that $N  \geq KM$ (in fact, we will take $KM$ negligible compared to $N$). For $t=0$, we define 
$$
x_i (0) = X_{\sigma_i} (0),
$$ 
with $\sigma_i = \sigma_i(0)$. Assuming that the process $x(\cdot)$ has been constructed up to time $t \in \N$, the vector $x(t+1) \in \R^M$ is obtained according to the inductive rule below. 

\begin{enumerate}
\item \emph{Branching step:} Every particle $x_i(t)$ is replaced by $K$ new particles (reproductive law), whose positions are defined by a point process $\mathcal{L}^{(K)} \big( x_i(t) \big)$ translated by $x_i(t)$.

The point processes $\left( \mathcal{L}^{(K)} \big( x_i(t) \big); \ 1 \! \leq \! i \! \leq \! M \right)$ are also constructed according to an inductive rule, that we describe:  
\begin{itemize}
\item {\it For $i=1$}, let $\mathcal{T}_1 := \{1, \ldots, N - K M \}$ and denote by 
$$ 
\xi_{\sigma_1(t)}^{(1:\mathcal{T}_1)} (t+1) \geq \xi_{\sigma_1 (t)}^{(2:\mathcal{T}_1)} (t+1) \geq \ldots \geq \xi_{\sigma_1 (t) }^{(K:\mathcal{T}_1)} (t+1), 
$$ 
the $K$ largest values among $\big\{ \xi_{\sigma_1 (t), j} (t+1) ; \,  j \in \mathcal{T}_1  \big\}$. Let, also,
$$
\mathcal{J}_1 = \mathcal{J}_1 (t+1) : = \{ j_1^{(1)}, \ldots, j_K^{(1)} \}
$$ 
be the set of their indices, that is,
$$
\xi_{\sigma_1(t) }^{(l:\mathcal{T}_1)} (t+1) = \xi_{\sigma_1 (t) , j^{(1)}_l} (t+1),
$$
we will keep track of these labels. Note that the indices $j^{(1)}_l = j^{(1)}_l (t\!+\!1); \ 1\leq \! l \! \leq K$ are random. Then, $\mathcal{L}^{(K)}\big( x_1(t) \big)$ is the point process 
$$
\mathcal{L}^{(K)}\big( x_1(t) \big) := \sum_{j \in \mathcal{J}_1 (t+1)} \delta_{ \left\{ \xi_{\sigma_{1}  j } (t+1) \right\} },
$$
obtained by the $K$ largest values in $\mathcal{T}_1$ and the descendants of $x_1(t)$ are at the positions:
$$
x_1(t) + \xi_{\sigma_1 (t)}^{(l: \mathcal{T}_1)} (t+1) \qquad \text{ for } 1 \leq l \leq K.
$$

\item {\it Assume that we have constructed $\left( \mathcal{L}^{(K)}\big( x_j(t) \big); 1\! \leq \! j \! \leq \! i-1 \right)$ and that we have kept track of the sets $\mathcal{J}_{1}, \ldots,\mathcal{J}_{i-1}$, appearing in the respective constructions. The sets $\mathcal{J}_{j} = \mathcal{J}_{j} (t+1) \subset \{1,\ldots, N\} $ are random and disjoint.} Then, given $\mathcal{J}_1 \cup \ldots \cup \mathcal{J}_{i-1}$, we choose
$$
\mathcal{T}_{i} = \mathcal{T}_{i}(t+1) \subset \{1, \ldots, N\} \setminus \big(\mathcal{J}_1 \cup \ldots \cup \mathcal{J}_{i-1}  \big)
$$ 
according to a deterministic rule. For example, one can choose the $N-MK$ first elements (in the usual order of $\N$) in $\{1, \ldots, N\} \setminus \big(\mathcal{J}_1 \cup \ldots \cup \mathcal{J}_{i-1}  \big)$. By construction, $\mathcal{T}_i$ is a random set of $\{1,\ldots, N \}$ satisfying the property
$$
\mathcal{T}_i \cap \mathcal{J}_1 = \emptyset = \mathcal{T}_i \cap \mathcal{J}_2 = \ldots = \mathcal{T}_i \cap  \mathcal{J}_{i-1}.
$$  
Let 
$$
\xi_{\sigma_i(t)}^{(1: \mathcal{T}_i)}  (t+1)\geq \xi_{\sigma_i (t)}^{(2: \mathcal{T}_i )} (t+1) \geq \ldots \geq \xi_{\sigma_i (t) }^{(K: \mathcal{T}_i)} (t+1)
$$ 
be the $K$ largest values among $\big\{ \xi_{\sigma_i (t) ,j} (t+1) ; \, j \in \mathcal{T}_i \big\}$, and $\mathcal{J}_i(t+1) = \{ j_1^{(i)}, \ldots, j_K^{(i)} \}$ be the set of their indices, that is,
$$
\xi_{\sigma_i(t)}^{(l: \mathcal{T}_i)}  (t+1) = \xi_{\sigma_i(t) j_l^{(i)} } (t+1).
$$ 
Then, $\mathcal{L}^{(K)}\big( x_i(t) \big)$ is the point process formed by the these $K$ points. 
\end{itemize}   

We end up the branching step with $K M$ new particles.

\item \emph{Selection:} We select the $M$ rightmost particles among the $K M$ obtained in the branching step.

\item \emph{Ordering:} We reorder the $M$ selected particles to obtain the vector $x(t+1)$.
\end{enumerate}

In the next two lemmas, we show that $x(t)\prec X(t)$ and that $\mathcal{L}^{(K)} (\cdot)$ are i.i.d. which implies that the point process
$$
\mathpzc{x}(t) := \sum_{i=1}^{M} \delta_{\{ x_i(t)\}}
$$
has the distribution of the point process obtained from a M-BRW $\big(\mathcal{L}^{(K)} \big)$.

First, we prove that $x(t)$ bounds $X(t)$ from below. We bring to the reader's attention that the next lemma is a direct corollary of the construction of $x(t)$ and it holds without any assumption on the family $\{ \xi_{ij} (s); 1 \leq i,j \leq N , \, s \geq 1 \}$. 

\begin{lem} \label{lemma_coupling_BRW} For $N \geq MK$, let $x(t)$ be the branching/selection process constructed as above. Then, $x(t)$ bounds $X(t)$ from below.  
\end{lem}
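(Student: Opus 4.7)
The plan is to proceed by induction on $t$ with inductive hypothesis $x(t)\prec X(t)$. The base case $t=0$ is immediate since $x_i(0)=X_{\sigma_i(0)}(0)=X^{(i)}(0)$ just reorders $X(0)$, so that $x(0)\prec X(0)$ holds trivially. The substance of the lemma lies in the inductive step.

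Assume $x(t)\prec X(t)$. Because $x(t)$ has been reordered at the previous iteration, the hypothesis reads $x_i(t)=x^{(i)}(t)\leq X^{(i)}(t)=X_{\sigma_i(t)}(t)$ for every $i\leq M$. I would then track the $KM$ offspring produced at the branching step one at a time: for $i\leq M$ and $j\in\mathcal{J}_i(t+1)$, the corresponding descendant of $x_i(t)$ sits at position $x_i(t)+\xi_{\sigma_i(t),j}(t+1)$, and from the dynamics (\ref{equa: definition front propagation}), taking the parent $k=\sigma_i(t)$ in the maximum defining $X_j(t+1)$, one reads off
\[
X_j(t+1)\;\geq\;X_{\sigma_i(t)}(t)+\xi_{\sigma_i(t),j}(t+1)\;\geq\;x_i(t)+\xi_{\sigma_i(t),j}(t+1).
\]
In other words, each of the $KM$ offspring is pointwise dominated by a specific particle of $X(t+1)$ carrying the same second index $j$.

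Next I would exploit the disjointness of $\mathcal{J}_1(t+1),\ldots,\mathcal{J}_M(t+1)$ built into the inductive construction of the $\mathcal{T}_i$'s: the $KM$ labels $j$ appearing above are all distinct, so the assignment just described is an injection from the $KM$ offspring into the $N$ particles of $X(t+1)$. A standard majorization fact --- if two families of the same cardinality satisfy $y_k\leq Y_k$ pointwise then their decreasing rearrangements satisfy the same inequality coordinatewise --- then shows that the multiset of offspring is $\prec$-dominated by the submultiset $\{X_j(t+1):j\in\bigcup_i\mathcal{J}_i\}$, which in turn is $\prec$-dominated by the full vector $X(t+1)$ since the latter is a superset. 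Finally, the selection step keeps the $M$ rightmost offspring and thus preserves $\prec$ (the top $M$ order statistics of the smaller multiset are dominated by those of the larger), while the subsequent reordering is pure relabeling; this closes the induction.

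I do not expect a genuine obstacle. The lemma is a purely combinatorial coupling statement, and indeed it requires no assumption on the law of $\xi_{ij}$, as the remark preceding it emphasizes. The one delicate point, which motivates the somewhat elaborate inductive construction of the $\mathcal{T}_i$'s, is that no particle $X_j(t+1)$ may be charged twice as a dominator; otherwise the counting underlying the majorization step would collapse. This is precisely what the disjointness of the $\mathcal{J}_i$'s is designed to guarantee.
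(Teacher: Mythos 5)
Your proof is correct and follows the paper's argument almost line for line: the same induction, the same use of $x_i(t)\le X_{\sigma_i(t)}(t)$, the same injection $j\mapsto X_j(t+1)$ via the dynamics (\ref{equa: definition front propagation}), the same appeal to the disjointness of the $\mathcal{J}_i$'s, and the same observation that selection preserves $\prec$. The only difference is that you spell out the rearrangement/majorization fact that the paper leaves implicit, which is a slight gain in explicitness but not a different route.
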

\begin{proof} It is immediate that $x(0) \prec X (0)$, hence assume that $x(t) \prec X (t)$. Before the selection step, there are $M K$ points at the positions 
$$
x_i(t) + \xi_{\sigma_i(t), j_l^{(i)} } (t+1) \qquad 1 \leq l \leq K \text{ and } 1 \leq i \leq M.
$$  
By the construction of $x(\cdot)$ the $j_l^{(i)}$ are all distinct. Since $x_i(t) \leq X_{\sigma_i(t)} (t)$, we have that
$$
x_i(t) + \xi_{\sigma_i(t), j_l^{(i)} } (t+1)  \leq X_{\sigma_i(t)} (t) + \xi_{\sigma_i(t), j_l^{(i)} } (t+1) \leq X_{j_l^{(i)}} (t+1).
$$
Hence, the point process obtained from the branching step bounds $X(t+1)$ from below, as a consequence, after the selection step $x(t+1) \prec X(t+1)$, proving the statement.
\end{proof}

Now, we prove that the point processes $\mathcal{L}^{(K)} (\cdot)$ are i.i.d. Lemma~\ref{lemma independence point processes} holds under the unique assumption that the family $\{ \xi_{ij} (s); 1 \leq i,j \leq N , \, s \geq 1 \}$ is i.i.d. 

\begin{lem}\label{lemma independence point processes} Assume that $N \geq KM$, and let $\big\{ \mathcal{T}_i (t); t \in \N ; i=1, \ldots, M \big\}$ be the set of indices obtained in the above construction. For $t \geq 0$ denote by $\Xi\big(x_i(t) \big)$ the point process 
$$
\Xi \big(x_i(t) \big) := \sum_{j \in \mathcal{T}_i(t+1)}  \delta_{\big\{\xi_{\sigma_i (t) ,j} (t+1) \big\}},
$$
then, $\big\{\Xi \big(x_i(t) \big) ; 1  \leq \!  i \! \leq  M; \, t \!\in \! \N \big\}$ are i.i.d. 

In particular, the family of point processes $\left\{ \mathcal{L}^{(K)}\big (x_i(t) \big); \, t \in \N \text{ and } i=1, \ldots, M  \right\}$ is also i.i.d.
\end{lem}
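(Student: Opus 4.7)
The plan is to argue independence within a single generation first, then lift to joint independence across generations using the Markov structure of the noise. The key structural observation I would start from is that the noise matrix $\bigl(\xi_{ij}(t+1)\bigr)_{1 \le i,j \le N}$ is independent of the filtration $\F_t$ defined in (\ref{eq:filtration}), while the ranking permutation $\sigma(t)$ is $\F_t$-measurable. Because $\sigma(t)$ is a permutation of $\{1,\dots,N\}$, the rows $R_i^{(t+1)} := \bigl(\xi_{\sigma_i(t),j}(t+1)\bigr)_{j=1}^N$ for $i=1,\dots,M$ correspond to $M$ distinct rows of an i.i.d. array, so conditionally on $\F_t$ they are i.i.d. vectors whose coordinates are themselves i.i.d. copies of $\xi$.

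For fixed $t$, I would show by induction on $i$ that $\Xi(x_i(t))$ has, conditionally on $\F_t$, the law of a point measure formed by $N - MK$ i.i.d. copies of $\xi$, and is conditionally independent of $\bigl(R_1^{(t+1)},\dots,R_{i-1}^{(t+1)}\bigr)$. The case $i=1$ is immediate because $\mathcal{T}_1$ is deterministic. For the inductive step, condition on $\F_t$ together with $R_1^{(t+1)},\dots,R_{i-1}^{(t+1)}$: the sets $\mathcal{J}_1,\dots,\mathcal{J}_{i-1}$ and hence $\mathcal{T}_i$ become deterministic, while $R_i^{(t+1)}$ is independent of them and has i.i.d. coordinates, so the sub-vector $\bigl(R_{i,j}^{(t+1)}\bigr)_{j \in \mathcal{T}_i}$ is an i.i.d. $\xi$-vector of the fixed length $N-MK$, whose law does not depend on which positions $\mathcal{T}_i$ comprises. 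This delivers the inductive claim and, by aggregation over $i$, the conditional joint i.i.d.\ property within generation $t+1$.

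To propagate across generations, I would iterate the above observation. Since the conditional law of $\bigl(\Xi(x_i(t))\bigr)_{i=1}^M$ given $\F_t$ does not depend on $\F_t$, the family is simultaneously independent of $\F_t$ and i.i.d.\ within itself. Because $\F_t$ contains all the $\Xi(x_j(s))$ with $s<t+1$, chaining these conditional statements yields mutual independence of the full double-indexed family with a common distribution. The second claim of the lemma for $\mathcal{L}^{(K)}(x_i(t))$ follows immediately, since each $\mathcal{L}^{(K)}(x_i(t))$ is a deterministic functional (the top-$K$ order statistics) of $\Xi(x_i(t))$.

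The main obstacle I anticipate is purely one of bookkeeping: the selection rule for $\mathcal{T}_i$ is random, as it depends on $\mathcal{J}_1,\dots,\mathcal{J}_{i-1}$, so one cannot directly condition on $\mathcal{T}_i$ equalling a deterministic set from the outset. The workable observation is that at the moment we examine $R_i^{(t+1)}$, the set $\mathcal{T}_i$ has already been constructed from strictly earlier data, and exchangeability of the coordinates of $R_i^{(t+1)}$ then makes the precise choice of $\mathcal{T}_i$ irrelevant for the conditional distribution of $\Xi(x_i(t))$. This exchangeability argument is the real engine of the proof.
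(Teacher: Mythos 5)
Your proposal is correct and follows essentially the same route as the paper: both rest on the fact that $\sigma(t)$ is $\F_t$-measurable and that the sets $\mathcal{T}_1,\dots,\mathcal{T}_{i-1}$ become determined once one additionally conditions on the earlier rows $R_1^{(t+1)},\dots,R_{i-1}^{(t+1)}$, while the row $R_i^{(t+1)}$ remains an independent i.i.d.\ vector, so its restriction to $\mathcal{T}_i$ has a fixed i.i.d.\ law that is free of the conditioning. The paper compresses your explicit row-by-row induction into the phrase ``by successive conditioning,'' and both arguments then deduce the statement for $\mathcal{L}^{(K)}(x_i(t))$ as a deterministic functional of $\Xi(x_i(t))$.
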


\begin{proof} Note that the families of random variables 
$$
\{ \sigma(s); \, 0 \leq s \leq  t \}, \quad \big\{ \mathcal{T}_i (s) ; 0 \leq s \leq  t; 1 \leq i \leq  M \big\}, \quad \big\{ \Xi \big(x_i(s)\big) ; 1 \leq s \leq  t-1; 1 \leq i \leq  M \big\},
$$
are $\F_t$-measurable with $\F_t$ from (\ref{eq:filtration}). By assumption, $\sigma \big\{ \xi_{ij}(t+1); 1\leq i,j \leq N \big\}$ is independent from $\F_t$, then, by successive conditioning, one easily checks that conditionally on $\F_t$ the vector
$$
\big( \xi_{\sigma_i (t) ,j} (t+1); \, i=1, \ldots, M   \text{ and } j \in \mathcal{T}_i \big)  
$$ 
is distributed according to a $M \times (N-KM)$ vector, whose entries are i.i.d. copies of $\xi_{ij}$, which implies the independence from $\F_t$. Moreover, the conditional independence of the $\xi_{\sigma_i ,j} (t+1)$ yields that $\big(\Xi\big(x_i(t) \big); i=1, \ldots, M  \big)$ are also independent, which proves the first claim.

The second claim is an immediate consequence of the first part of the lemma.
\end{proof}

Finally, we focus on the asymptotic distribution of $\mathcal{L}^{(K)} (\cdot)$ after suitable rescaling,
\begin{equation}\label{equa_definition_coupling_point_process}
\PP^{(N,K)} \big( x_i(t) \big):= \sum_{z \in \mathcal{L}^{(K)} (x_i(t)) } \delta_{\left\{z a_N^{-1}  \right\} },
\end{equation}
for $a_N$ given by  (\ref{equation_definition_aN}). With some abuse of notation, we denote by $\PP^{(N,K)}$ the common distribution of these point processes.

\begin{prop}\label{proposition moment convergence} Assume that the hypothesis of Theorem~\ref{thm:main_theorem} hold with $x_\xi = 0$ and that $M$ and $K$ are fixed. Then, as $N \to \infty$, 
$$\PP^{(N,K)} \longrightarrow \PP^{(K)} \qquad {\rm in\ law,}$$
with $ \PP^{(K)}$ defined in Corollary (\ref{cor:PPK}) with $\beta=\alpha-1$ and $C=\alpha$.
Moreover, for every $\ell > 0$ the moment convergence 
$$
\lim_{N \to \infty} \E \left[ \left|\min \PP^{(N, K)} \right|^\ell \right] = \E \left[ \left|\min \PP^{(K)} 
\right|^\ell \right] < \infty
$$ 
also holds.
\end{prop}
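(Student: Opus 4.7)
The plan is to split the statement into the weak convergence and the uniform moment bound: the weak convergence reduces to classical Poisson convergence of rescaled upper order statistics in the Weibull domain of attraction, and the moment convergence follows by upgrading via a uniform binomial lower-tail estimate.

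By Lemma~\ref{lemma independence point processes}, conditionally on $\F_t$ and on $\sigma(t)$, the variables $\{\xi_{\sigma_i(t),j}(t+1):\ j\in\mathcal{T}_i\}$ are $N':=N-KM$ i.i.d.\ copies of $\xi$. Hence $\mathcal{L}^{(K)}(x_i(t))$ is distributed as the $K$ largest order statistics $\xi_{(1)}\geq\cdots\geq\xi_{(K)}$ among $N'$ i.i.d.\ copies of $\xi$, and $\PP^{(N,K)}$ is the associated point measure rescaled by $a_N^{-1}$. Under our hypothesis on $F$, the classical Poisson convergence (see, e.g., Proposition~3.21 in \cite{Resnick1987}) yields
$$
\sum_{j=1}^{N'}\delta_{\xi_j/a_N}\longrightarrow \PP\qquad\text{in law,}
$$
vaguely on $\R_-$, where $\PP$ is the PPP with intensity $\alpha|z|^{\alpha-1}\text{d}z$; passing from the standard scaling by $a_{N'}$ to scaling by $a_N$ is harmless because the normalizing sequence is regularly varying of index $-1/\alpha$ and $KM$ is fixed, so $a_{N'}/a_N\to 1$. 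To extract the top $K$ atoms, I choose $L>0$ large enough that $\PP$ has at least $K$ points in $[-L,0]$ with probability arbitrarily close to $1$, restrict both point processes to $[-L,0]$, and apply the continuous mapping theorem to the ``top $K$'' projection; letting $L\to\infty$ yields $\PP^{(N,K)}\to\PP^{(K)}$ in law.

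For the moment claim, it is enough to establish a uniform bound on the moments of order $\ell+1$ in order to conclude by uniform integrability. Since $|\min\PP^{(N,K)}|=-\xi_{(K)}/a_N$, one has
$$
\P\bigl(|\min\PP^{(N,K)}|>z\bigr)=\P(B_{N,z}\leq K-1),\qquad B_{N,z}\sim \text{Binomial}\bigl(N',1-F(-za_N)\bigr).
$$
Writing $1-F(-h)=h^\alpha L(1/h)$ with $L$ slowly varying, Potter's inequality combined with $N(1-F(-a_N))\to 1$ and $N'/N\to 1$ furnishes $\delta\in(0,\alpha)$ together with $z_0,N_0$ such that $N'\bigl(1-F(-za_N)\bigr)\geq \tfrac{1}{2}z^{\alpha-\delta}$ for all $z\geq z_0$ and $N\geq N_0$. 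A Chernoff lower-tail bound for the binomial then gives
$$
\P(B_{N,z}\leq K-1)\leq\exp\bigl(-z^{\alpha-\delta}/16\bigr)
$$
once $z$ is large enough that the binomial mean exceeds $2K$. Integrating this bound against $\ell z^{\ell-1}$ and handling $z\leq z_0$ with the trivial bound $1$ yields $\sup_N\E\bigl[|\min\PP^{(N,K)}|^{\ell+1}\bigr]<\infty$, which upgrades the weak convergence to convergence of the $\ell$-th moment for every $\ell>0$.

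The main technical hurdle is the uniform binomial tail bound, that is, controlling $N'(1-F(-za_N))$ from below simultaneously in large $z$ and large $N$; this is precisely where regular variation of $1-F$ at $0^-$ enters, through Potter's inequality. Everything else is classical extreme value theory combined with the standard uniform integrability argument.
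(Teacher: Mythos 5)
Your weak-convergence argument is correct and is essentially the route the paper takes: reduce to the rescaled upper order statistics of $N'$ i.i.d.\ copies, invoke the classical Poisson convergence of the rescaled sample point process in the Weibull domain of attraction, and then extract the top $K$ atoms (you do the last step more carefully than the paper, which simply cites it as classical). For the moment convergence you diverge from the paper, which dispatches this by invoking a ``line-by-line adaptation'' of Resnick's Proposition~2.1; you instead give a direct uniform-integrability argument. That is a legitimate alternative strategy, but as written it contains a real error.

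The problematic step is the claim that there exist $z_0,N_0$ such that
$N'\bigl(1-F(-za_N)\bigr)\geq \tfrac12 z^{\alpha-\delta}$
for \emph{all} $z\geq z_0$ and $N\geq N_0$. This cannot hold: the left-hand side is bounded above by $N'$, which does not depend on $z$, while the right-hand side is unbounded in $z$; the inequality already fails for $z>(2N')^{1/(\alpha-\delta)}$. Potter's inequality only yields a lower bound of the form $z^{\alpha-\delta}$ in the regime where the argument $za_N$ of $1-F(-\,\cdot\,)$ stays small, i.e.\ for $z\lesssim h_0/a_N$ with $h_0$ a fixed threshold; once $za_N$ is of order one the regular-variation comparison breaks down and the binomial mean $N'(1-F(-za_N))$ saturates at $O(N')$. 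Consequently the Chernoff estimate $\exp(-z^{\alpha-\delta}/16)$ is false in the far range of $z$, and the uniform-in-$N$ integrability of $z^{\ell}\P(|\min\PP^{(N,K)}|>z)$ is not established by your argument.

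The conclusion is nonetheless true and your approach can be repaired, but you need a second estimate covering the range $z>h_0/a_N$. There, $za_N\geq h_0$ so $F(-za_N)\leq q:=F(-h_0)<1$, and hypothesis \ref{first_hypotesis} gives exponential moments, hence $F(-za_N)\leq C'e^{-\epsilon za_N}$ for some $\epsilon>0$. Bounding the binomial lower tail directly by
$\P(B_{N,z}\leq K-1)\leq \binom{N'}{K-1}F(-za_N)^{N'-K+1}
\leq \binom{N'}{K-1}q^{N'-K}C'e^{-\epsilon za_N}$
and substituting $y=za_N$ shows that the contribution of this range to $\E\bigl[|\min\PP^{(N,K)}|^{\ell+1}\bigr]$ is at most a constant times $N'^{K-1}q^{N'-K}a_N^{-\ell-1}$, which tends to $0$ as $N\to\infty$ because the geometric factor $q^{N'}$ dominates the polynomial growth of $a_N^{-\ell-1}$. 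Combined with the Potter--Chernoff bound on $z\in[z_0,h_0/a_N]$ and the trivial bound for $z\leq z_0$, this yields the desired uniform moment bound. As submitted, however, the proof does not carry out this case split, and the stated inequality on which it relies is false.
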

\begin{proof} It suffices to prove the convergence for $ \PP^{(N,K)} \big( x_1(t) \big)$. Since $\xi_{ij}$ is in the domain of attraction of $\Psi_\alpha$ and $x_\xi = 0$, for every $z > 0$, 
\begin{equation} \label{eq:ousertattraction}
\P \left( \xi_{ \sigma_1(t) j} (t) > -z a_N \right)  \sim z^{\alpha} \P \left( \xi_{ \sigma_1(t) j} (t) > - a_N \right)  \sim \frac{z^{\alpha}}{N}  \qquad \quad \text{ as } N \to \infty.
\end{equation}
It is a classical result of extreme value theory  \cite{Resnick1987}  that, as $N \to \infty$, the point process 
$$
\PP^{(N,K)} \stackrel{\rm law}{=}\sum_{j =1}^{N-KM} \delta_{ \left\{ a_N^{-1} \xi_{1, j } (t) \right\} }.
$$
converges in distribution to a PPP with intensity measure $|z|^{\alpha-1} \alpha \textbf{1}_{\{z < 0\}}  \text{d}z$, as claimed.
A necessary and sufficient condition for the convergence of the $\ell$th moment is that the r.v. $\xi_{ij}$ has itself finite $\ell$th moment, which is a consequence of \ref{first_hypotesis}. Proposition~2.1 in \cite{Resnick1987} proves this statement for the maxima of i.i.d. random variables in the domain of attraction of $\Psi_\alpha$. 

Now, a line-by-line adaptation of Proposition~2.1 in \cite{Resnick1987} yields the last claim. 
\end{proof}

A straightforward consequence of Proposition~\ref{proposition moment convergence} and the two previous lemmas is that
\begin{equation} \notag
 \mathpzc{x}^{(N)} (t): = \sum_{i=1}^{M} \delta_{ \left\{ a_N^{-1} x_i (t) \right\} }
\end{equation}
converges in distribution to the point process obtained from a $M$-BRW$\big(\PP^{(K)}\big)$ at time $t$, moreover,
$$
\frac{a_N}{t}\max \left(  \mathpzc{x}^{(N)} (t) \right) \leq \frac{1}{t}\max_{1 \leq j \leq N} \big(  X_j(t) \big).
$$ 
We will prove in Subsection~\ref{subsection velocity BRW} that if one chooses $K$ and $M$ large enough (depending only on $\varepsilon$ and the distribution $\xi_{ij}$), then for $N$ larger than some $N_0 > 0$, 
\begin{equation}\label{equaiton_lowerbound_minimum}
\liminf_{t \to \infty}\frac{1}{t} \max \left(  \mathpzc{x}^{(N)} (t) \right) \geq  -c_\alpha - \varepsilon \qquad a.s.
\end{equation}
which proves the lower bound (\ref{equation lowerbound epsilon}). 

\subsection{Uniform lower bound for the velocities} \label{subsection velocity BRW}

In this subsection, we prove the lower bound (\ref{equaiton_lowerbound_minimum}), which concludes the proof of Theorem~\ref{thm:main_theorem}. The proof is divided in two main steps. 

In the first one, we focus on the BRWs defined by $\PP^{(N,K)}$. We prove that if $N$ is sufficiently large, with positive probability there exists more than $M$ vertices $w$ in generation $n$ (see Subsection~\ref{subsubsection_first_step} for its definition), such that
$$
{\rm position}(w_t) \geq -(c_\alpha +\varepsilon/2) t \qquad \forall w_t \in [\![ e, w ]\!].
$$
In the second step, we use this result to obtain the uniform lower bound (\ref{equaiton_lowerbound_minimum}) for the $M$-BRWs.

Most of the arguments presented here have already been used by B\'erard and Gou\'er\'e \cite{Berard2010}. In our case, though, we deal with a family of BRWs indexed by $N$, whereas in \cite{Berard2010} they compute the velocity for a unique $M$-BRW. 

\subsubsection{First step}\label{subsubsection_first_step}

Let $\PP^{(N,K)}$ be the distribution defined by (\ref{equa_definition_coupling_point_process}) and $\PP^{(K)}$ denote the distribution of a point process obtained from the $K$ largest points of a PPP with intensity measure $ |z|^{\alpha-1} \alpha \mathbf{1}_{\{z \leq 0\}} \text{d}z$. Then, BRW$\big(\PP^{(N,K)}\big)$ and BRW$\big(\PP^{(K)}\big)$ generate the same Galton-Watson tree, in which every individual has a constant number $K$ of offspring, denote by $\mathbb{T}_K$ this tree. We will construct these BRWs on a same probability space. 

Let $\{ \PP^{(N,K)} (w); w \in \mathbb{T}_K \}$ be i.i.d. copies of $\PP^{(N,K)}$, $\{ \PP^{(K)} (w); w \in \mathbb{T}_K \}$ be i.i.d. copies of $\PP^{(K)}$, and $\big(\Omega, \F, \P \big)$ be a probability space where those families of r.v. are defined. Since $\PP^{(N,K)}$ converges to $\PP^{(K)}$ in distribution (see Proposition~\ref{proposition moment convergence}), we can and we will assume that the stronger a.s. convergences
\begin{equation}
\label{eq:assextra}
\lim_{N \to \infty} \PP^{(N,K)} (w) = \PP^{(K)}(w)  \quad a.s.
\end{equation}
hold for all $w \in \mathbb{T}_K$, which implies the point-to-point convergence 
$$
\lim_{N \to \infty} \left\|\PP^{(N,K)} \big( w \big) - \PP^{(K)}\big( w \big) \right\| = 0 \qquad a.s.  
$$
where $\|\cdot \|$ is the distance defined in (\ref{measure distance}). Note that we have not lost in generality, since we can always construct a probability space $\big(\Omega, \F, \P \big)$, for which the a.s. convergence holds.

Under these hypothesis, the construction goes as follows. Each individual $w \in \mathbb{T}_K$ has $K$ offspring, that we label according to some deterministic order. Let $w^{(i)}$ be its $i$th children, then, its position $ \eta^{(N)}(ww^{(i)})$ and $\eta^{(\infty)}(ww^{(i)})$ in the BRW$\big(\PP^{(N,K)}\big)$ and BRW$\big(\PP^{(K)}\big)$ are given by
$$
\eta^{(N)}\big(ww^{(i)} \big) = \eta^{(N)}(w) + \PP^{(N,K)}_i(w) \quad  \text{ and } \quad \eta^{(\infty)}\big(ww^{(i)} \big)= \eta^{(\infty)}(w) + \PP^{(K)}_i(w),
$$ 
where  $\PP^{(N,K)}_i(w)$ and $ \PP^{(K)}_i(w)$ denote the $i$th largest point in  $\PP^{(N,K)}(w)$ and $ \PP^{(K)}(w)$ respectively. This construction couples the BRWs and for $w \in \mathbb{T}_K$ fixed
$$
\lim_{N \to \infty} \eta^{(N)}(w)= \eta^{(\infty)}(w) \qquad a.s.  
$$

A direct calculation shows that $\PP^{(K)}$ satisfies (\ref{equation reproductive assumption}--\ref{equation log generating function relation}), which implies the existence of the asymptotic velocity $\gamma\big(\PP^{(K)} \big)$, with $\gamma$ given by
(\ref{equation convergence classical BRW}). Lemma~\ref{Poisson_lemma} with $C=\alpha$ and $\beta = \alpha -1 > -1$ yields 
$$
\lim_{K \to \infty} \gamma \big( \PP^{(K)} \big) = - \frac{\alpha}{\e} \left( \frac{1}{\Gamma(\alpha) \alpha } \right)^{\frac{1}{\alpha}} = - c_\alpha .
$$ 
Let $\delta = \varepsilon/12$, then there exists $K_0$ such that $\forall K \geq K_0$ 
\begin{equation}\label{equa_defi_K}
\gamma \big(\PP^{(K)} \big) \geq -c_\alpha - \delta.
\end{equation}
Fix $K$ for which (\ref{equa_defi_K}) holds; {\it we bring to the reader's attention that, as  \ref{first_hypotesis}, \ref{second_hypotesis} hold, $\PP^{(N,K)}$ also satisfies (\ref{equation reproductive assumption}), (\ref{equation log generating function assumption}). Moreover, a simple calculation shows that $\gamma\big(\PP^{(N,K)}\big)$ converges to $\gamma\big(\PP^{(K)}\big)$ as $N \to \infty$.}
\medskip

We now prove that with positive probability there exists more than $M$ individuals $\tilde{w} \in \mathbb{T}$ in generation $n$ such that
$$
\eta^{(N)} \big( \tilde{w}_t \big) \geq -c_\alpha t - 6\delta, \qquad \text{for every } \tilde{w}_t \in [\![ e, \tilde{w} ]\!].
$$
As it will become clearer in the sequel, we take $n$ of the form $n=s_M + m$, with
\begin{equation}\label{equation_defintion_SM_m}
s_M : = \left\lceil  \frac{\ln M}{\ln \varphi} \right\rceil +1 \quad \text{and} \quad m = \left\lceil \frac{\big(|R|-c_\alpha - 6 \delta \big) s_M }{3 \delta }  \right\rceil.
\end{equation}
The constants $\varphi > 1$ and $R < -c_\alpha - 6 \delta < 0 $ are given by Lemma~\ref{lemma galton watson tree} and formula (\ref{equation defintion R}) below and depend only on the distribution $\PP^{(K)}$. Although $M$ may be very large, it will be kept constant throughout this section (while $N\to \infty$), hence $s_M$ and $m$ are also constants. 

First, we obtain a lower bound for the probability of the set 
$$
\Big\{  \exists{w} \in \mathbb{T}_K \text{ in generation $m$ such that } \eta^{(N)} \big( w_t \big) \geq (-c_\alpha - 3 \delta) t; \, \forall w_t \in [\![ e, w ]\!] \Big\}.
$$
Denote by $A_{m, \delta}$ the set
$$
A_{m, \delta} := \Big\{ \left\|\PP^{(N,K)} \big(w' \big) - \PP^{(K)} \big(w' \big) \right\| \leq \delta ; \forall w' \in \mathbb{T}_K \text{ such that } |w'| \leq m   \Big\}
$$
then, for $m \in \N$ and $\delta$ fixed, one obtains from (\ref{eq:assextra}) that $\P(A_{m, \delta}) \to 1$ as $N \to \infty$. Since $\gamma\left(\PP^{(K)} \right) \geq c_\alpha  - \delta $ we have the following set inclusions
\begin{align*}
\big\{ &  \exists{ w }\in \mathbb{T}_K \text{ such that $|w|=m$ and }  \eta^{(N)} \big( w_t \big) \geq (-c_\alpha - 3 \delta) t; \, \forall w_t \in [\![ e, w ]\!] \big\} \\
& \supset \big\{ \exists{ w} \in \mathbb{T}_K \text{ such that $|w|=m$ and }  \eta^{(N)} \big( w_t \big) \geq (-c_\alpha - 3 \delta) t; \, \forall w_t \in [\![ e, w ]\!] \big\} \cap A_{m, \delta}  \\
&\supset \big\{\exists{w} \in \mathbb{T}_K \text{ such that $|w|=m$ and } \eta^{(\infty)} \big( w_t \big) \geq (-c_\alpha - 2 \delta) t; \, \forall w_t \in [\![ e, w ]\!] \big\} \cap A_{m, \delta} \\
&\supset \big\{\exists{w} \in \mathbb{T}_K \text{ such that $|w|=m$ and } \eta^{(\infty)} \big( w_t \big) \geq (-\gamma \big(\PP^{(K)} \big) - \delta) t; \, \forall w_t \in [\![ e, w ]\!] \big\} \cap A_{m, \delta} \\
& \supset \big\{\text{BRW$\big(\PP^{(K)}\big)$ has an infinite ray lying above the line of slope } (\gamma(\PP^{(K)})\!-\! \delta)  \big\} \cap A_{m, \delta},
\end{align*}  
which yields the lower bound
$$
\liminf_{N \to \infty} \P\Big(  \exists{w} \in \mathbb{T}_K \text{ such that $|w|=m$ and }  \eta^{(N)} \big( w_t \big) \geq (-c_\alpha - 3 \delta) t; \, \forall w_t \in [\![ e, w ]\!] \Big) \geq \rho(\infty, \delta).
$$
From Theorem~\ref{theorem gantert hu shi}, $\rho(\infty, \delta) > 0$ is a constant depending only on $\PP^{(K)}$. Then, there exists $N_M \in \N$ depending only on $m$ (and hence, on $M$) such that $\forall N \geq N_M$
\begin{equation} \notag
\P\Big(  \exists{w} \in \mathbb{T}_K : |w|=m,  \eta^{(N)} \big( w_t \big) \geq (-c_\alpha - 3 \delta) t, \, \forall w_t \in [\![ e, w ]\!] \Big) \geq \frac{\rho(\infty, \delta)}{2}.
\end{equation}

Now, we choose $R$ and $\varphi$ in (\ref{equation_defintion_SM_m}). Since $\PP^{(K)} \big( ]-\infty, 0[ \big)= K$, one can take $R<-c_\alpha -6\delta< 0$ such that 
\begin{equation} \label{equation defintion R}
\P \left( \PP^{(K)} [R, 0) \geq 2 \right) > \frac{2}{3}.
\end{equation}
Using the convergence in distribution, there exists a $N' > 0$ such that for $N \geq N'$
$$
\P \left( \PP^{(N,K)} [R, 0) \geq 2 \right) \geq \frac{2}{3}.
$$ 
Without loss of generality, we can and we will assume that $N_M \geq N'$. The Galton-Watson tree whose offspring distribution is 
\begin{equation}\label{galton watson offspring distribution}
p_i^{(N)} = \P \left( \PP^{(N,K)} [R, 0) = i \right), \qquad i=0,1,\ldots,
\end{equation}
has mean offspring larger than $4/3$. It is supercritical, and the following well-known result holds. 

\begin{lem}[\cite{Athreya2004} Theorem 2 Section~6 Chapter~1]
 \label{lemma galton watson tree} Let $M_t$ denote the population size of a supercritical Galton-Watson process with square integrable offspring distribution (started with one individual). Then, there exists $r> 0$ and $\varphi >1$ such that for all $t \geq 0$
$$
\P(M_t \geq \varphi^t) \geq r.
$$
\end{lem}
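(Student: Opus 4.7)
My plan is to use the classical $L^2$-martingale argument. Write $\mu := \E[M_1] > 1$ and $\sigma^2 := \mathrm{Var}(M_1) < \infty$, and set $W_t := M_t/\mu^t$. Conditioning on generation $t-1$, each individual gives birth independently to a copy of the offspring law, so $W_t$ is a nonnegative martingale, and a direct conditional variance computation yields $\mathrm{Var}(W_t) = (\sigma^2/\mu^2)\sum_{k=0}^{t-1}\mu^{-k}$, which stays bounded uniformly in $t$. Hence $W_t$ converges almost surely and in $L^2$ to a limit $W$ with $\E[W]=1$; in particular $\P(W>0)>0$.

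Next, I would fix any $\varphi \in (1,\min(\mu,2))$. From $M_t/\varphi^t = W_t\,(\mu/\varphi)^t$ and the a.s.\ convergence $W_t\to W$ it follows that on $\{W>0\}$ we have $M_t\geq \varphi^t$ for every $t$ large enough. Fatou's lemma for indicator functions therefore gives
$$
\liminf_{t\to\infty}\P(M_t\geq\varphi^t) \;\geq\; \P\!\left(\liminf_{t\to\infty}\{M_t\geq \varphi^t\}\right) \;\geq\; \P(W>0) \;>\; 0,
$$
so there exists $t_0$ with $\P(M_t\geq\varphi^t) \geq \tfrac{1}{2}\P(W>0)$ for every $t\geq t_0$.

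It remains to secure a positive lower bound over the finite range $t<t_0$. Since $\mu>1$, the offspring law is not supported in $\{0,1\}$, so $p:=\P(M_1\geq 2)>0$. Forcing each of the (at most) $2^t-1$ individuals in the first $t$ generations to produce at least two offspring yields $\P(M_t\geq 2^t)\geq p^{2^t-1}>0$, and since $\varphi<2$ this also bounds $\P(M_t\geq\varphi^t)$ from below. Taking $r$ to be the minimum of these finitely many positive numbers together with $\tfrac{1}{2}\P(W>0)$ gives a uniform constant valid for every $t\geq 0$.

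The main delicate point is to establish $\P(W>0)>0$; the square-integrability hypothesis is precisely what allows the clean $L^2$-martingale route and spares us from invoking the more subtle Kesten--Stigum $L\log L$ theorem. The minor technical annoyance is stitching the asymptotic Fatou bound to the finitely many early generations into a single constant $r$ valid for every $t\geq 0$.
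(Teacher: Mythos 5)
Your proof is correct, and it is exactly the argument the paper intends: the citation to Athreya--Ney (Theorem~2, Section~I.6) is precisely the $L^2$-martingale theorem you invoke in your first step ($W_t = M_t/\mu^t$ converges a.s.\ and in $L^2$ to a limit $W$ with $\E[W]=1$, using square-integrability of the offspring law). The paper states the lemma as an immediate consequence of that reference and gives no further argument, so what you have done is fill in the routine derivation: $\P(W>0)>0$, Fatou for the events $\{M_t\geq\varphi^t\}$ with $1<\varphi<\min(\mu,2)$, and a crude positive lower bound over the finitely many initial generations.

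The only place where you might want a sentence more of care is the bound $\P(M_t\geq 2^t)\geq p^{2^t-1}$ with $p=\P(M_1\geq 2)$: the total number of individuals in the first $t$ generations need not be $\le 2^t-1$ once some of them have more than two offspring, so ``forcing each of them'' is not literally a product of $2^t-1$ factors of $p$. The clean way to get your inequality is to run a greedy embedded binary tree (from each individual that has $\geq 2$ offspring, retain two of them); the event that this embedded tree reaches depth $t$ involves exactly $1+2+\cdots+2^{t-1}=2^t-1$ independent ``good'' individuals and so has probability $p^{2^t-1}$, and on it $M_t\geq 2^t>\varphi^t$. Alternatively, and even more simply, pick any $k_0\geq 2$ with $q:=\P(M_1=k_0)>0$ and force all individuals for $t$ generations to have exactly $k_0$ offspring. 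Either way the conclusion $\P(M_t\geq\varphi^t)>0$ for each fixed $t$ holds, and the rest of your argument goes through.
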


Let $M_t^{(N)}$ denote the population size of the Galton-Watson processes defined by $(p_i^{(N)})_{i= 0, 1,\ldots}$. Using a simple coupling argument and Lemma~\ref{lemma galton watson tree}, we can find a $\varphi >1$ and $r>0$ not depending on $N \geq N_M$ such that for all $t \geq 1$
$$
\P(M_t^{(N)} \geq \varphi^t ) > r.
$$
With $m$ and $s_M $ from (\ref{equation_defintion_SM_m}), we have that
$$M_{s_M}^{(N)} \geq M; \qquad {\rm with\ probability\ at\ least\ }r > 0,$$
and that
$$
(-c_\alpha - 3 \delta) m + R(t-m) \geq (-c_\alpha -6  \delta) t; \qquad \text{ for every } m \leq t \leq m+s_M .
$$
Let $ww' \in \mathbb{T}_K$ be a vertex in generation $n$, with the following properties: $|w| = m$,
$$
\eta^{(N)} \big(w_{t} \big) > (-c_\alpha - 3 \delta) t, \qquad \forall w_t \in [\![ e, w ]\!],
$$ 
$w'$ is in generation $s_M$ in the $\mathbb{T}_K$ sub-tree descending from $w$ and
$$
\eta^{(N)} \big(w'_{s+1} \big) - \eta^{(N)} \big(w'_{s} \big) \geq R, \qquad \forall w'_s \in [\![ w, w' ]\!] . 
$$ 
Then, by a simple calculation one can conclude that the path $[\![ e, ww' ]\!] \subset \mathbb{T}_K$ has always lain above the line of slope $-c_\alpha -6  \delta$. For $N \geq N_M$, a conditioning argument yields the lower bound for the probabilities 
\begin{align*}
\P & \left(  \sharp \left\{ \tilde{w} \in \mathbb{T}_K; \, |\tilde{w}|=n \text{ and } \eta^{(N)} (\tilde{w}_t) \geq -\left(c_\alpha + \frac{\varepsilon}{2}\right) t \; \forall \tilde{w}_t \in [\![e,\tilde{w}]\!] \right\} \geq M \right) \\
&  \geq \P  \Big( \exists{w} \in \mathbb{T}_K \text{ such that } \eta^{(N)} (w_t) \geq (-c_\alpha - 3 \delta) t; \, \forall w_t \in  [\![ e, w ]\!] \text{ and } M^{(N)}_{s_M} \geq M \Big) \\
& \geq r \frac{\rho(\infty, \delta)}{2},
\end{align*}
in the second equation, $w$ is a vertex in generation $m$ and $M^{(N)}_t$ is the population size of the Galton-Watson process generated by the descendants of $w$ for which  
$$
\eta^{(N)} \big(w'_{s+1} \big) - \eta^{(N)} \big(w'_{s} \big) \geq R \qquad \forall w'_s \in  [\![ w, w' ]\!] . 
$$ 
In particular, we have just proved the following proposition.

\begin{prop}\label{proposition BRW M particles above the slope} Let $\big(\eta^{(N)} (w); \, w \! \in \! \mathbb{T}_{K} \big) $ the BRW defined by the point processes $\PP^{(N,K)}$. Given $\varepsilon > 0$ let $R$ be given by (\ref{equation defintion R}), $r$ and $\varphi$ as in Lemma~\ref{lemma galton watson tree}. Then, take $s_M$ and $m$ as in (\ref{equation_defintion_SM_m}). Then, with $n= m+s_M$,  we can find some $N_M$ (depending only on $M$) such that if $N \geq N_M$
$$
\P\left( \sharp \left\{ w \in \mathbb{T}_K; \, |w|=n \text{ and } \eta^{(N)} (w_t) \geq -\left(c_\alpha + \frac{\varepsilon}{2}\right) t ; \, \forall w_t \in [\![e,w]\!] \right\} \geq M  \right) \geq \frac{\rho(\8, \varepsilon/12)}{2} r
$$
\end{prop}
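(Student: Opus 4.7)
The plan is to combine the Gantert--Hu--Shi theorem applied to the limiting BRW$(\PP^{(K)})$ with a supercritical Galton--Watson estimate in a subsequent block of generations, following the outline sketched just before the statement. The argument splits naturally into three steps: produce one ray reaching generation $m$ above slope $-c_\alpha - 3\delta$; from its endpoint grow a thinned sub-tree of size at least $M$ in the next $s_M$ generations; and check geometrically that every path of this sub-tree stays above slope $-(c_\alpha + \varepsilon/2)$.

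For the first step, I use the a.s.\ coupling (\ref{eq:assextra}). Introduce
$$
A_{m,\delta} = \bigl\{\|\PP^{(N,K)}(w') - \PP^{(K)}(w')\| \leq \delta\ \forall w' \in \mathbb{T}_K \text{ with } |w'|\leq m\bigr\},
$$
whose probability tends to $1$ as $N \to \8$ since $m$ is fixed. On $A_{m,\delta}$, any ray of BRW$(\PP^{(K)})$ up to generation $m$ that lies above the line of slope $\gamma(\PP^{(K)}) - \delta \geq -c_\alpha - 2\delta$ yields, through ancestor-by-ancestor perturbations of size at most $\delta$, a ray of BRW$(\PP^{(N,K)})$ lying above slope $-c_\alpha - 3\delta$. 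Theorem~\ref{theorem gantert hu shi} produces such an infinite ray in the limit BRW with probability $\rho(\8,\delta) > 0$, and I deduce that for all $N$ large
$$
\P\bigl(\exists w \in \mathbb{T}_K,\ |w|=m,\ \eta^{(N)}(w_t) \geq (-c_\alpha - 3\delta)t\ \forall w_t \in [\![e,w]\!]\bigr) \geq \tfrac{\rho(\8,\delta)}{2}.
$$

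For the second step, I condition on the first $m$ generations together with one such $w$. The sub-BRW rooted at $w$ is an independent copy of BRW$(\PP^{(N,K)})$. Thinning offspring by the rule ``keep only displacements in $[R,0)$'' defines a Galton--Watson process with offspring law $(p_i^{(N)})$ from (\ref{galton watson offspring distribution}); by (\ref{equation defintion R}) together with Proposition~\ref{proposition moment convergence}, for $N \geq N'$ this law is $\{0,\dots,K\}$-valued with mean at least $4/3$. Lemma~\ref{lemma galton watson tree} then gives constants $\varphi > 1$ and $r > 0$ (not depending on $N$) such that the thinned population at generation $s_M$ reaches $\varphi^{s_M} \geq M$ with probability at least $r$. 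For the third step, along any descendant $ww'$ in this thinned sub-tree, the position at generation $m \leq t \leq n$ is bounded below by $(-c_\alpha - 3\delta)m + R(t-m)$, and the definition of $m$ in (\ref{equation_defintion_SM_m}) was tailored so that this quantity is at least $-(c_\alpha + 6\delta)t = -(c_\alpha + \varepsilon/2)t$ throughout $[m,n]$. Conditional independence of the sub-tree from the first $m$ generations multiplies the two probabilities, yielding the announced lower bound $r \cdot \rho(\8,\delta)/2$.

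The only delicate point is the uniformity in $N$ of the Galton--Watson constants $r$ and $\varphi$ in the second step: Lemma~\ref{lemma galton watson tree} is stated for a fixed law, whereas here the offspring law $(p_i^{(N)})$ depends on $N$. Uniformity is salvaged by the fact that $(p_i^{(N)})$ is supported in $\{0,\dots,K\}$ and converges to a non-degenerate supercritical law with mean strictly greater than $2$, which provides a uniform lower bound on the mean and a uniform upper bound on the second moment for $N \geq N'$; inspecting the proof of Lemma~\ref{lemma galton watson tree} then shows that $r$ and $\varphi$ can be taken uniform in $N \geq N'$. With this in hand, $N_M$ depends only on $M$ (via the control of $A_{m,\delta}$), and the proposition follows.
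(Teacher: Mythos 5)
Your proof follows the same three-step structure as the paper's: first use the coupling event $A_{m,\delta}$ together with Theorem~\ref{theorem gantert hu shi} to produce with probability $\geq \rho(\infty,\delta)/2$ a ray to generation $m$ above slope $-(c_\alpha+3\delta)$; then thin the sub-tree rooted there to the displacements in $[R,0)$ and use Lemma~\ref{lemma galton watson tree} to grow $\geq M$ descendants in $s_M$ further generations with probability $\geq r$; finally verify the geometric slope condition over $[m,n]$ (and trivially over $[0,m]$) and multiply by conditional independence. The only blemish is the claim that the limiting thinned offspring law has ``mean strictly greater than $2$'': inequality (\ref{equation defintion R}) only gives mean $\geq 4/3$, which is all that is needed. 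Your justification for the $N$-uniformity of $r,\varphi$ (boundedness of the offspring law on $\{0,\dots,K\}$ and moment control, then inspecting the proof of the lemma) is a valid alternative to the paper's, which instead stochastically dominates from below by a fixed supercritical law (e.g.\ $p_0=1/3,\ p_2=2/3$) using $\P(\PP^{(N,K)}[R,0)\geq 2)\geq 2/3$; the latter lets one use the lemma as a black box.
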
 

\subsubsection{Second step: uniform lower bound for the speed}\label{speed M BRW}

In this step, we obtain a uniform lower bound for the speed of $a_N^{-1} x(t)$, which is simply a M-BRW$\big(\PP^{(N,K)}\big)$. Let $\mathpzc{x}^{(N)} (t)$ be the point process associated to $a_N^{-1} x(t)$
$$
\mathpzc{x}^{(N)} (t) := \sum_{i=1}^{M} \delta_{\{a_N^{-1}x_i(t) \}},
$$ 
and $\gamma_M \big(\PP^{(N,K)} \big)$ be the asymptotic velocity of the $M$-BRW$\big(\PP^{(N,K)} \big)$,
$$
\lim_{t \to \infty }t^{-1} \min \left( \mathpzc{x}^{(N)} (t) \right)=\lim_{t \to \infty }t^{-1} \max \left( \mathpzc{x}^{(N)} (t) \right) = \gamma_M(\PP^{(N,K)}) \qquad a.s.
$$
Then, we will prove that for $\varepsilon > 0$ and $K$ given by (\ref{equa_defi_K}) the inequality
$$
 \liminf_{M\to \infty} \left( \liminf_{N\to \infty} \gamma_M(\PP^{(N,K)}) \right) \geq -(c_\alpha + \varepsilon)
$$ 
holds, which finishes the proof of Theorem~\ref{thm:main_theorem}. Following the strategy of \cite{Berard2010}, we construct a third point process $W(t)= W^{(N)}(t)$ that bound $\mathpzc{x}^{(N)} (t) $ from below. This new point process evolves like $\mathpzc{x}^{(N)} (t) $ up to a certain random time $\tau_i, \, i \in \N$, from which we shift the position of all particles to the minimal position, and start $W(t)$ afresh.  

Let $n = m+s_M$, where $s_M$ and $m$ are given by  (\ref{equation_defintion_SM_m}). We will construct the process $W(t)$ and the stopping times $0=\tau_0 < \tau_1 < \ldots $ together
\begin{equation} \notag
\tau_1:= \inf \left\{ 1 \leq s \leq n; \, \min \left( \mathpzc{x}^{(N)} (s) \right) \geq (-c_\alpha - \varepsilon/2) s  \right\},
\end{equation} 
where $\inf\{ \emptyset \} = n $. Then, $\tau_1 \leq n$ is a stopping time with respect to the filtration $\F_t$. For $0 \leq t \leq \tau_1$ let
$$
W(t) = \mathpzc{x}^{(N)} (t). 
$$
and $ m_1 := \min \big(W (\tau_1) \big)$, then at the time step $\tau_1 \to \tau_1 +1$ we shift all particles $W_i$ to $m_1$ and continue the construction up to $\tau_2$ according to the induction step.

\emph{Inductive step:} assume that $\tau_1 \! < \! \ldots \! < \! \tau_l$ and $W (t)$ for $t \! \leq \! \tau_l$ are defined. Then, for $\tau_l \!+\! 1\! \leq \! t \! \leq \! \tau_{l+1}$ (we will define $ \tau_{l+1}$ below), $W(t)$ is the point process of a $M$-BRW$\big(\PP^{(N,K)}\big)$ starting from
$$
m_{l} := \min\big(W (\tau_l) \big).
$$ 
At each time step $t \to t+1$ the individuals $\big(W_i(t)\big)_{i=1,\ldots,M}$ give birth to $K$ new individuals, whose positions are determined by independent point process $\big(\PP^{(N,K)} (x_i(t)); i=1,\ldots,M  \big)$, and die immediately afterwards. We assume that the point process defining $\mathpzc{x}^{(N)}$ and $W(\cdot)$ are the same. Moreover, we will also assume that the indices are organized in order to couple $\mathpzc{x}^{(N)}$ by $W(\cdot)$. We then select the $M$ rightmost particles to form the next generation. 

The process evolves as above up to
\begin{equation} \notag
\tau_{l+1}:= \inf \big\{ \tau_{l}+1 \leq s \leq \tau_{l} + n; \, \min \left( W(s) \right) -m_l \geq (-c_\alpha - \varepsilon/2) s  \big\},
\end{equation} 
where we shift the positions of the $M$ particles to $m_{l+1}$, the minimum of the positions. It is immediate from the construction of $W(\cdot)$ that   
$$
W(t) \prec \mathpzc{x}^{(N)} (t) \qquad \forall t \in \N.
$$
For $l \geq 1$, the processes $\big( W(t)-m_l; \  t \in [\tau_l+1, \tau_{l+1}]  \big)$ and the random variables $\tau_{l+1} - \tau_l$ are i.i.d. In the sequel, we use the notation $\tau := \tau_1$, then by the law of large numbers
$$
\lim_{l\to \infty} \frac{1}{l} \min \left( \mathpzc{x}^{(N)} (\tau_l) \right) = \gamma_M \big(\PP^{(N,K)}\big) \E[\tau] \qquad a.s.
$$ 
From the construction of $W(\cdot)$ and the renewal theorem we also obtain that
$$
\liminf_{l\to \infty} \frac{1}{l} \min \left( \mathpzc{x}^{(N)} (\tau_l) \right) \geq 
\liminf_{l\to \infty} \frac{1}{l} \min \big( W(\tau_l) \big) =
\E\left[ \min \left( W(\tau) \right) \right] \quad a.s.
$$ 
%
which implies that
\begin{equation}
\label{eq:1fin}
\gamma_M \big(\PP^{(N,K)} \big) \geq  \frac{\E\left[ \min \big( W(\tau) \big) \right]}{\E[\tau]}.
\end{equation}
With $B= \big\{ \min \big( W(\tau)\big) < (-c_\alpha - \varepsilon/2) \tau  \big\}$,
we write
\begin{align*}
\min \big( W(\tau) \big) & \geq (-c_\alpha - \varepsilon/2) \tau \textbf{1}_{B^\complement} + \min \big(W(n)\big) \textbf{1}_{B}  \\
& = (-c_\alpha - \varepsilon/2) \tau + (c_\alpha + \varepsilon/2) \tau \textbf{1}_{B} \notag + \min \big( W(n) \big)  \textbf{1}_{B}.
\end{align*}
Taking expected value we get
\begin{equation}
\label{eq:2fin}
\E\big[\min \big( W(\tau) \big)\big]  \geq (-c_\alpha - \varepsilon/2) \E[\tau] + \E \big[ \min \big( W(n) \big) \textbf{1}_{B} \big]. 
\end{equation}
Let $\min \Big( \PP^{(N,K)} \big(W_i(t) \big) \Big)$ be the smallest point of the point process generated by $W_i(t)$ before the selection step, it has the law of the $K$th maxima of a $N-KM$ sample of $\xi_{ij}$. Since $\xi_{ij} \leq 0$, one gets the lower bound 
$$
\min \big( W(n) \big) \geq   \sum_{t=0}^{n} \sum_{i=1}^{M} \min \Big( \PP^{(N,K)} \big(W_i(t) \big) \Big),
$$
which implies that
$$
\E\big[\min \big( W(n) \big) \textbf{1}_{B}\big] \geq -(n+1) M \E \left[ \left| \min \left( \PP^{(N,K)} \right) \textbf{1}_{B} \right|  \right].
$$ 
The Cauchy-Schwarz inequality yields
$$
\E[\min W(n) \textbf{1}_{B}] \geq -(n+1) M \E \left[ \left|\min \left( \PP^{(N,K)} \right) \right|^2  \right]^{1/2} \P(B)^{1/2}.
$$
By Proposition~\ref{proposition moment convergence}, the second moment of $\min \left( \PP^{(N,K)} \right) $ converges as $N \to \infty$ to a finite constant. Hence, there exists a constant $\tilde{c}$, depending only on $\xi_{ij}$, such that 
$$
\E \big[\min W(n)\textbf{1}_{B} \big] \geq -\tilde{c} (n+1) M   \P(B)^{1/2}.
$$
Finally, the probability of $B$ can be estimated using Proposition~\ref{proposition BRW M particles above the slope}. The evolution of different individuals in the $M$-BRW is not independent. Yet, a $M$-BRW can be coupled with $M$ independent BRWs, see Section~3.3 in \cite{Berard2010}, so that the event ``the minimum of the $M$-BRW always lies below the line of slope $-c_\alpha - \varepsilon/2$" implies that none of the $M$ independent BRWs has more than $M$ vertices in generation $n$ that have always stayed above this line, hence
$$
\P(B) \leq \left( 1- \frac{\rho(\infty, \varepsilon/12)}{2} r \right)^{M}.
$$
From the definition of $n$, if $M$ is large enough $\tilde{c} (n+1) M < M^2$ and
$$
\limsup_{M \to \infty} M^2 \left( 1- \frac{\rho(\8,\varepsilon/12)}{2} r \right)^{M/2}  =0.
$$
Then, choosing $M$ properly (note that it depends only on $\varepsilon$ and $\xi_{ij}$ but not on $N$), one gets
$$
\E [ \min W(n) \textbf{1}_{B}] \geq -\frac{\varepsilon}{2}.
$$
Combined with (\ref{eq:1fin}, \ref{eq:2fin}), this ends the proof.

\bigskip

{\bf Acknowledgements:} The authors thank Bernard Derrida for stimulating conversations 
on this model.

\bigskip

{\footnotesize
\bibliographystyle{plain}
\bibliography{density_velocity}

\begin{thebibliography}{10}

\bibitem{Aidekon2013}
{E}. A{\"i}d{\'e}kon.
\newblock Convergence in law of the minimum of a branching random walk.
\newblock {\em Ann. Probab.}, 41(3{A}):1362--1426, 2013.

\bibitem{Athreya2004}
K.~B. Athreya and P.~E. Ney.
\newblock {\em Branching process}.
\newblock Dover Publications, 2004.

\bibitem{Berard2010}
{J}. {B}{\'e}rard and {J}.-{B}. {G}ou{\'e}r{\'e}.
\newblock Brunet-{D}errida behavior of branching-selection particle systems on
  the line.
\newblock {\em Comm. Math. Phys.}, 298(2):323--342, 2010.

\bibitem{Berestycki2013}
{J}. Berestycki, {N}. Berestycki, and {J}. Schweinsberg.
\newblock The genealogy of branching {B}rownian motion with absorption.
\newblock {\em Ann. Probab.}, 41(2):527--618, 2013.

\bibitem{Biggins77}
J.~D. Biggins.
\newblock Chernoff's theorem in the branching random walk.
\newblock {\em J. Appl. Probab.}, 14(3):630--636, 1977.

\bibitem{Bramson1983}
M~Bramson.
\newblock Convergence of solutions of the {K}olmogorov equation to travelling
  waves.
\newblock {\em Mem. Amer. Math. Soc.}, 44(285):iv+190 pp, 1983.

\bibitem{Brunet1997}
E.~{B}runet and B.~Derrida.
\newblock Shift in the velocity of a front due to a cut-off.
\newblock {\em Phys. Rev. E}, 56:2597--2604, 1997.

\bibitem{Brunet1999}
E.~{B}runet and B.~{D}errida.
\newblock Microscopic models of traveling wave equations.
\newblock {\em Comput. Phys. Commun.}, (121-122):376--381, 1999.

\bibitem{Brunet2004}
{E}. {B}runet and {B}. {D}errida.
\newblock {E}xactly soluble noisy traveling-wave equation appearing in the
  problem of directed polymers in a random medium.
\newblock {\em Phys. Rev. E}, 70(1):016106, 2004.

\bibitem{Comets2013}
F.~Comets, J.~Quastel, and A.F. Ram{\'\i}rez.
\newblock Last passage percolation and traveling fronts.
\newblock {\em J. Stat. Phys.}, 152:419--451, 2013.

\bibitem{Cook1990}
J.~Cook and B.~Derrida.
\newblock Directed polymers in a random medium: 1/d expansion and the n-tree
  approximation.
\newblock {\em J. Phys. A}, 23(9):1523--1554, 1990.

\bibitem{Cortines2013}
{A}. Cortines.
\newblock Front velocity and directed polymers in random medium.
\newblock {\em Stochastic Process. Appl.}, 124:3698--3723, 2014.

\bibitem{Cortines2014}
{A.} Cortines.
\newblock The genealogy of a solvable population model under selection with
  dynamics related to directed polymers.
\newblock {\em arXiv:1406.1432}, 2014.

\bibitem{Couronne2011}
O.~Couronn{\'e} and L.~Gerin.
\newblock A branching-selection process related to censored {G}alton-{W}atson
  process.
\newblock {\em Ann. Inst. Henri Poincar{\'e} Probab. Stat.}, 50(1):84--94,
  2014.

\bibitem{Daley2003}
{D}.{J}. Daley and {D}. Vere-Jones.
\newblock {\em An introduction to the theory of point processes}.
\newblock Springer-{V}erlag, 2003.

\bibitem{Durrett2011}
R.~Durrett and D.~Remenik.
\newblock Brunet-{D}errida particle systems, free boundary problems and
  {W}iener-{H}opf equations.
\newblock {\em Ann. Probab.}, 39(6):2043--2078, 2011.

\bibitem{Gantert2011}
{N}. Gantert, {Y}. Hu, and {Z}. Shi.
\newblock Asymptotics for the survival probability in a killed branching random
  walk.
\newblock {\em Ann. Inst. Henri Poincar{\'e} Probab. Stat.}, 47(1):111--129,
  2011.

\bibitem{Maillard2011}
P.~Maillard.
\newblock Branching {B}rownian motion with selection of the {N} right-most
  particles: {A}n approximate model.
\newblock {\em arXiv:1112.0266}, 2011.

\bibitem{Mallein2015}
{B}. Mallein.
\newblock {$N$}-branching random walk with {$\alpha$}-stable spine.
\newblock {\em arXiv:1503.03762}.

\bibitem{Mueller2011}
C.~Mueller, L.~Mytnik, and J.~Quastel.
\newblock Effect of noise on front propagation in reaction-diffusion equations
  of {KPP} type.
\newblock {\em Invent. Math.}, 184(2):405--453, 2011.

\bibitem{Panja2004}
{D}. Panja.
\newblock Effects of fluctuations on propagating fronts.
\newblock {\em Phys. Rep.}, 393(2):87--174, 2004.

\bibitem{Resnick1987}
S.~I. Resnick.
\newblock {\em Extreme values, regular variation, and point processes}.
\newblock Springer-Verlag, 1987.

\end{thebibliography}
}

\end{document}